\numberwithin{equation}{section}
\title[DIOPHANTINE EXPONENTS OF AFFINE SUBSPACES]{DIOPHANTINE EXPONENTS OF AFFINE SUBSPACES: THE SIMULTANEOUS APPROXIMATION CASE}
\author{Yuqing Zhang}
\address{Brandeis University, Waltham MA
02454-9110 {\tt yqzhang@brandeis.edu}}
\newif\ifdraft\drafttrue
\newcommand{\R}{{\mathbb{R}}}
\newcommand{\Z}{{\mathbb{Z}}}
\newcommand{\N}{{\mathbb{N}}}
\newcommand{\q}{{\bf{q}}}
\newcommand{\GL}{\operatorname{GL}}
\newcommand{\SL}{\operatorname{SL}}
\newcommand{\diag}{{\rm diag}}
\newcommand{\ou}{{\omega(\y)}}
\newcommand{\su}{{\sigma(\y)}}
\newcommand{\x}{{\bf x}}
\newcommand{\w}{{\bf w}}
\newcommand{\p}{{\bf p}}
\newcommand{\y}{{\bf y}}
\newcommand {\ignore}[1]  {}
\newtheorem{thm}{Theorem}[section]
\newtheorem{lem}[thm]{Lemma}
\newtheorem{prop}[thm]{Proposition}
\newtheorem{cor}[thm]{Corollary}
\newtheorem{remark}[thm]{Remark}
\begin{document}

\begin{abstract}
We apply  nondivergence estimates for flows on homogeneous spaces to compute
Diophantine exponents of affine subspaces of $\R^n$ and their nondegenerate submanifolds.

\end{abstract}


\maketitle

\section{Introduction}

Given any $\y=(y_1,\ldots,y_n) \in \R^n$ we set its norm as follows:
\begin{equation}
\|\y\|=\max\{|y_1|,|y_2|,\ldots,|y_n|\}
\end{equation}

 We may view $\y$ as a linear form and define its Diophantine
exponent as
\begin{equation}
\label{eq: omega} \omega(\y)=\textrm{sup} \{v|\textrm{ } \exists \infty
\textrm{ many } \q \in \Z^n  \textrm{ with } |\q\y+p|<\|\q\|^{-v}
\textrm{ for some } p \in Z\}
\end{equation}
where $\q\y=q_1y_1+q_2y_2+\ldots+q_ny_n$.

Alternatively we may define Diophantine exponent of  $\y$ in the context of simultaneous approximation:
\begin{equation}
\label{eq: sigma} \sigma(\y)=\textrm{sup} \{v|\textrm{ } \exists \infty
\textrm{ many } q \in \Z \textrm{ with } \|q\y+\p\|<|q|^{-v}
\textrm{ for some } \p \in \Z^n\}
\end{equation}

It can be deduced from Dirichlet's Theorem (\cite{Cassels}) that
\begin{equation}\label{eq: minkowski}
\su\geqslant\frac{1}{n}, \quad \ou\geqslant n \quad \forall \y \in \R^n
\end{equation}

Khintchine's Transference Theorem (see chapter V of \cite{Cassels} for instance) tells
\begin{equation}\label{eq: transference1}
\dfrac{\omega(\y)-n+1}{n}\geqslant \sigma(\y)\geqslant
\dfrac{1}{n-1+n/\omega(\y)},\quad \forall \; \y \in\R^n
\end{equation}
In particular $\su=\frac{1}{n}$ if and only if $\ou=n$.  We call $\y$  not very well approximable if  $\su=\frac{1}{n}$ and call $\y$
 very well approximable otherwise. It is known that
 $\su=\frac{1}{n}$ and $\ou=n$ for $a.\mathcal{}e. \;\y$, hence the set of  not very well approximable vectors has full Lebesgue measure .

 Following \cite{exponent}  the Diophantine exponent $\omega(\mu)$ of a
 Borel measure $\mu$ is set to be the $\mu$-essential supremum of the $\omega$
function, that is,
\begin{equation}
\label{eq: deu} \omega(\mu)=\textrm{sup} \{v|\textrm{ }
\mu\{\y|\textrm{ }\omega(\y)>v\}>0 \}
\end{equation}
If $M$ is a smooth submanifold of $\R^n$ and $\mu$ is the measure class of the Riemannian volume on $M$ (more precisely put, $\mu$ is
the pushforward $\mathbf{f}_*\lambda$ of $\lambda$ by any smooth map $\mathbf{f}$ parameterizing $M$), then the Diophantine exponent
of $M$, $\omega(M)$, is set to be equal to $\omega(\mu)$. In the spirit of \eqref{eq: deu} let us define
\begin{equation}
\label{eq: deusigma} \sigma(M)=\sigma(\mu)\stackrel{def}{=}\textrm{sup} \{v|\textrm{ }
\mu\{\y|\textrm{ }\sigma(\y)>v\}>0 \}
\end{equation}
$\omega(M)\geqslant n$ and $\sigma(M)\geqslant\dfrac{1}{n}$ by Dirichlet's Theorem combined with \eqref{eq: deu} and \eqref{eq: deusigma}.
M is called extremal if  $\sigma(M)=\frac{1}{n}$ or $\omega(M)=n$.
A trivial example of an extremal
submanifold of $\R^n$ is $\R^n$ itself.

 K. Mahler (\cite{M}) conjectured in 1932 that
\begin{equation}\label{eq: curve}
 M=\{(x,x^2,\ldots,x^n)|x \in \R\}
\end{equation}
 is an extremal submanifold. This was proved by
Sprind\u{z}uk (\cite{Sp1}) in 1964. The curve of \eqref{eq: curve} has a notable property that it does not lie in
 any affine subspace of $\R^n$. We might describe and formalize this property in terms of nondegeneracy condition as follows.
 Let \\$\mathbf{f}=(f_1,\ldots,f_n): U\rightarrow \R^n$ be a differentiable map where $U$
is an open subset of $\R^d$. $\mathbf{f}$ is called nondegenerate in
an affine subspace $L$ of $\R^n$ at $\x \in U$ if $\mathbf{f}(U)
\subset L$ and the span of all the partial derivatives of
$\mathbf{f}$ at $\x$ up to some order coincides with the linear part
of $L$. If $M$ is a $d$ dimensional submanifold  of $L$ we will say
that $M$ is nondegenerate in $L$ at $\y \in M$ if any diffeomorphism
of $\mathbf{f}$ between an open subset $U$ of $\R^d$ and a
neighborhood of $\y$ in $M$ is nondegenerate in $L$ at
$\mathbf{f}^{-1}(\y)$. We will say $M$ is nondegenerate in $L$ if it
is nondegenerate in $L$ at almost all points of $M$.

   It was conjectured by Sprind\u{z}uk (\cite{Sp2})
in 1980 that almost all points on a nondegenerate analytic submanifold of $\R^n$ are not very well approximable.
In 1998 D. Kleinbock and  G.A. Margulis   proved that
\begin{thm}([KM])
Let $M$ be a smooth nondegenerate submanifold of $R^n$, then $M$ is extremal, i.e. almost all points of $M$ are not very well approximable.
\end{thm}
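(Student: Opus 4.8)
The plan is to recast extremality as a statement about the escape rate of a flow on the space of unimodular lattices and then apply quantitative nondivergence. Write $X=\SL_{n+1}(\R)/\SL_{n+1}(\Z)$ for the space of covolume-one lattices in $\R^{n+1}$, and to each linear form $\y\in\R^n$ associate the unipotent element
\begin{equation*}
\uy=\begin{pmatrix} 1 & \y \\ 0 & I_n \end{pmatrix}\in\SL_{n+1}(\R),
\end{equation*}
together with the diagonal semigroup $\gt=\diag(e^{t},e^{-t/n},\ldots,e^{-t/n})$. A lattice vector $\gt\uy(p,\q)^{T}$ has first coordinate $e^{t}(\q\y+p)$ and remaining coordinates $e^{-t/n}\q$, so the existence of good solutions to $|\q\y+p|<\|\q\|^{-v}$ is exactly the assertion that the trajectory $\{\gt\uy\Z^{n+1}:t\ge 0\}$ enters deeper and deeper into the cusp of $X$. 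The Dani correspondence makes this precise: measuring depth by $\Delta(\Lambda)=-\log(\text{length of shortest nonzero vector of }\Lambda)$, one has that $\y$ is very well approximable precisely when $\limsup_{t\to\infty}\Delta(\gt\uy\Z^{n+1})/t>0$. Since \eqref{eq: transference1} shows $\su=\tfrac1n$ iff $\ou=n$, extremality of $M$ is equivalent to the statement that for $\mu$-almost every point this $\limsup$ vanishes.

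Fixing a smooth parametrization $\ff\colon U\to M\subset\R^n$, the target becomes a measure estimate: for almost every $\x\in U$ the trajectory $\gt u_{\ff(\x)}\Z^{n+1}$ escapes with rate zero. The central tool is the Kleinbock--Margulis quantitative nondivergence theorem. For a ball $B$ in parameter space and a map $h\colon B\to\SL_{n+1}(\R)$, suppose that for every rational subspace $V\subseteq\R^{n+1}$ the function $\x\mapsto\|h(\x)\mathbf{w}_V\|$ — where $\mathbf{w}_V\in\bigwedge^{\dim V}\Z^{n+1}$ is a primitive multivector representing $V$, so that the norm measures the covolume of $h(\x)(V\cap\Z^{n+1})$ — is $(C,\alpha)$-good and is not uniformly small on $B$. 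Then the measure of the set of $\x\in B$ for which $h(\x)\Z^{n+1}$ has a nonzero vector shorter than $\varepsilon$ is at most a constant times a positive power of $\varepsilon$. Applying this with $h(\x)=\gt\,u_{\ff(\x)}$ for each fixed large $t$ gives a power-law bound on the measure of parameters whose trajectory is $\varepsilon$-deep in the cusp at time $t$.

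It remains to verify the two hypotheses from the nondegeneracy of $M$. After replacing $\ff$ by its Taylor polynomial, the covolume functions $\x\mapsto\|\gt u_{\ff(\x)}\mathbf{w}_V\|$ are built from polynomials in the derivatives of $\ff$; smooth functions well approximated by polynomials of bounded degree $k$ are $(C,\tfrac1k)$-good on a fixed ball, which supplies the first hypothesis. The nondegeneracy condition — that the partial derivatives of $\ff$ up to some order span the linear part of the ambient affine subspace — is precisely what prevents these covolume functions from being simultaneously small, and so provides the lower bound on their suprema; this is the step that uses the geometry of $M$ essentially. Summing the resulting power-law bounds along a suitable sequence $t_k\to\infty$ and a geometric sequence $\varepsilon_k\to 0$ and invoking Borel--Cantelli then shows that for almost every $\x$ the escape rate is zero, whence $\ou=n$ and, by transference, $\su=\tfrac1n$.

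I expect the main obstacle to be the quantitative nondivergence estimate itself. Its proof proceeds by an induction over the partially ordered set of rational subspaces of $\R^{n+1}$: at each stage one controls the measure of the set where a given subspace attains small covolume by combining the $(C,\alpha)$-good property with a Besicovitch-type covering argument, while carefully bookkeeping the interaction between subspaces of different dimensions through the flag structure. The second delicate point is the bridge from the differential-geometric nondegeneracy condition to the analytic non-concentration bound on covolume functions, uniformly over a compact piece of $M$, together with the reduction of the smooth case to the polynomial case via Taylor approximation; this is where most of the technical work lies.
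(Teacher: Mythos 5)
Your proposal is correct and is essentially the same approach as the source: the paper does not reprove this theorem (it is quoted from [KM]), and the machinery it reviews in \S 2 --- the correspondence of Lemma \ref{lem: abequi}, the Borel--Cantelli reduction \eqref{eq: bcl}, the quantitative nondivergence estimate of Lemma \ref{lem:  thm2.2weaker}, and the $(C,\alpha)$-goodness of nondegenerate maps (Lemma \ref{lem: goodmap}) --- is exactly the scheme you describe, including the reduction of smooth maps to the polynomial case and the lower bound on covolume functions coming from nondegeneracy. The only cosmetic difference is that you run the dual ($\omega$) flow $g_t=\diag(e^{t},e^{-t/n},\ldots,e^{-t/n})$ and pass to $\sigma(\y)=\tfrac1n$ via transference \eqref{eq: transference1}, whereas the paper's own setup uses the simultaneous ($\sigma$) flow $g_t=\diag(e^{t/n},\ldots,e^{t/n},e^{-t})$; for extremality the two formulations are equivalent.
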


\cite{extremal} studies the conditions under which an affine subspace is  extremal and
showed that an affine space is extremal if and only if its nondegenerate submanifolds are extremal.
\cite{exponent} derives formulas for  computing $\omega(L)$ and $\omega(M)$ when $L$ is not extremal and $M$ is an arbitrary nondegenerate
submanifold in it. This breakthrough is achieved through sharpening of some nondivergence estimates in the space of unimodular lattices (see Lemma \ref{lem:  thm2.2} and Lemma \ref{lem:  thm2.2weaker} for review ). \cite{exponent} proves that
\begin{thm}\label{thm: exponent}[Theorem 0.3 of \cite{exponent}]
If $L$ is an affine subspace of  $\R^n$ and $M$ is a non-degenerate
submanifold in $L$, then
\begin{equation}
  \textrm{ }\omega(M)=\omega(L)=\inf  \{\omega(\x)|\textrm{ }\x \in L\}=\inf \{\omega(\x)|\textrm{ }\x \in M\}
\end{equation}
\end{thm}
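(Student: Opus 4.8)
The plan is to reduce the four-fold equality to a single statement about the almost-everywhere value of $\omega$ on $L$, and then to establish that statement with the sharpened quantitative nondivergence estimates. Write $\omega_0 \df \inf\{\omega(\x) : \x \in L\}$. Three of the required inequalities come for free. By definition of the infimum, $\omega(\x) \geq \omega_0$ for every $\x \in L$, hence also for every $\x \in M$ since $M \subset L$; consequently the set $\{\x : \omega(\x) > v\}$ has full $\mu_L$- and full $\mu_M$-measure for each $v < \omega_0$, giving $\omega(L) \geq \omega_0$ and $\omega(M) \geq \omega_0$, while $\inf\{\omega(\x):\x\in M\} \geq \inf\{\omega(\x):\x\in L\} = \omega_0$ because the infimum is taken over a smaller set. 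Thus the entire content of the theorem is the matching upper bound, which I would deduce from the following key claim: $\omega(\x) = \omega_0$ for $\mu_M$-almost every $\x \in M$ (and, taking $M=L$, which is itself nondegenerate in $L$, for $\mu_L$-almost every $\x \in L$). Granting the claim, $\{\omega > v\}$ is $\mu_M$-null for every $v > \omega_0$, so $\omega(M) \leq \omega_0$; and since almost every point of $M$ attains $\omega_0$, also $\inf_M \omega \leq \omega_0$. Combined with the free inequalities, this forces all four quantities to equal $\omega_0$.

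To prove the key claim I would pass to the dynamics on the space of unimodular lattices $\SL_{n+1}(\R)/\SL_{n+1}(\Z)$ via the Dani correspondence. Parametrizing $M$ by a nondegenerate map $\ff$ and writing $u_{\ff(\x)}$ for the unipotent encoding the linear form $\ff(\x)$, one has that $\omega(\ff(\x)) > v$ is equivalent to the orbit $\gt u_{\ff(\x)} \Z^{n+1}$ entering arbitrarily deep into the cusp at a rate prescribed by $v$, i.e. containing nonzero lattice vectors of norm $e^{-\lambda(v)t}$ for an unbounded set of times. Under this dictionary the value $\omega_0$ corresponds to the slowest escape rate that the affine hull $L$ forces on a generic point, a quantity governed by the positions of the rational subspaces of $\Z^{n+1}$ relative to $L$ and computed in \cite{exponent}.

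The main work, and the principal obstacle, is the upper bound: for $v > \omega_0$ the set of $\x$ with $\omega(\ff(\x)) > v$ is null. Here I would invoke the sharpened nondivergence estimates (Lemma \ref{lem:  thm2.2} and Lemma \ref{lem:  thm2.2weaker}). The strategy is to bound, for a suitable discrete sequence of times $\T_k \to \infty$, the measure of the parameter set for which $g_{\T_k} u_{\ff(\x)}\Z^{n+1}$ has a short vector, and then to sum these estimates in a Borel--Cantelli fashion. The crucial input is that the coordinate functions measuring the covolumes of the $\gt u_{\ff(\x)}$-images of rational subgroups of $\Z^{n+1}$ are $(C,\alpha)$-good and satisfy the nonplanarity (non-contraction) condition required by the estimate; this is exactly what nondegeneracy of $M$ in $L$ guarantees, and it is the mechanism by which $M$ inherits the exponent of $L$, consistent with the equivalence between extremality of $L$ and of its nondegenerate submanifolds established in \cite{extremal}.

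The delicate point is that I need more than extremality, which the Kleinbock--Margulis theorem already yields: I must extract the \emph{exact} threshold $\omega_0$. Concretely, the nondivergence estimate has to be applied with its constants tracked finely enough, and over the correct family of times and rational subspaces, so that escape faster than the rate $\lambda(\omega_0)$ is shown to occur only on a null set, while escape at exactly that rate persists almost everywhere. This quantitative sharpening, rather than the soft reduction of the first paragraph, is where I expect the real difficulty to concentrate.
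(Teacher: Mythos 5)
First, a framing point: the paper you were given never proves this statement --- it is quoted verbatim as Theorem 0.3 of \cite{exponent} and used as background. The relevant benchmark is therefore the paper's proof of the exact $\sigma$-analogue, Theorem \ref{thm: mainthm}, which is assembled from Proposition \ref{prop: proposition} (Borel--Cantelli plus the sharpened nondivergence of Lemma \ref{lem: thm2.2}), Lemma \ref{lem: negation}, and the three-way equivalence of Theorem \ref{thm: anothermain}. Your soft reduction (three free inequalities, plus the reformulation that $\omega(\x)=\omega_0\df\inf_L\omega$ almost everywhere) is correct, and your upper-bound mechanism --- Dani correspondence, $(C,\alpha)$-goodness of the covolume functions, summable nondivergence estimates over a discrete sequence of times --- is the right architecture and matches the method.

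There is, however, a genuine gap at the decisive step. The Borel--Cantelli argument only yields $\omega(M)\leqslant v^*$, where $v^*$ is the critical threshold $\sup\{v:\text{the covolume lower bound (the analogue of \eqref{eq: critical}) fails at level }v\}$; the theorem requires the bridge $v^*\leqslant\inf_L\omega$, and nothing in your sketch supplies it. The missing idea is the dichotomy embodied in Lemma \ref{lem: negation}: if the lower bound fails at level $v$ on every ball, then by Minkowski's lemma \emph{every} point of $\ff(B\cap\mathrm{supp}\,\mu)$ --- not merely almost every point --- has exponent strictly greater than $v$; since nonplanarity in $L$ reduces the covolume condition to one depending only on the matrix $R$ of \eqref{eq: rdefinition}, i.e.\ only on $L$, one may apply this with the affine parametrization of $L$ itself and conclude $\inf_L\omega\geqslant v$. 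This ``every point'' conclusion is precisely what ties the dynamical threshold to the infimum and forces $\inf=\mathrm{ess\,sup}$. Two related misstatements compound the gap: you assert that nondegeneracy guarantees the ``non-contraction condition required by the estimate,'' but nondegeneracy yields only goodness and nonplanarity (Lemma \ref{lem: goodmap}); condition (2) of Lemma \ref{lem: thm2.2} is the $L$-dependent hypothesis whose validity or failure is exactly what is being calibrated, so it cannot be guaranteed --- its failure must be exploited. And your closing paragraph locates the difficulty in showing that ``escape at exactly that rate persists almost everywhere,'' which is the free direction (every $\x\in L$ has $\omega(\x)\geqslant\omega_0$ by definition of the infimum, as you yourself noted); meanwhile outsourcing the identification of the threshold to a quantity ``computed in \cite{exponent}'' is circular, since the statement to be proved \emph{is} Theorem 0.3 of that paper.
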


This paper goes on to compute Diophantine exponents of nonextremal subspaces in the $\sigma$ context. We follow the strategy of associating Diophantine property of vectors with behavior of certain trajectories in the space of lattices.
Combined with dynamics we use nondivergence estimates in its strengthened format (Lemma \ref{lem:  thm2.2}) to  prove the following:
\begin{thm}\label{thm: mainthm}
If $L$ is an affine subspace of  $\R^n$ and $M$ is a non-degenerate
submanifold in $L$, then
\begin{equation}
  \textrm{ }\sigma(M)=\sigma(L)=\inf  \{\sigma(\x)|\textrm{ }\x \in L\}=\inf \{\sigma(\x)|\textrm{ }\x \in M\}
\end{equation}
\end{thm}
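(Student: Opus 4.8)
The plan is to translate Theorem \ref{thm: mainthm} into a statement about escape rates of a one-parameter diagonal flow on the space of unimodular lattices, and then run a quantitative nondivergence argument parallel to the one behind Theorem \ref{thm: exponent}. First I would set up the Dani correspondence adapted to the simultaneous problem \eqref{eq: sigma}. Put $\Omega=\SL_{n+1}(\R)/\SL_{n+1}(\Z)$, and for $\x\in\R^n$ let
\[ u_{\x}=\begin{pmatrix} 1 & 0 \\ \x & I_n \end{pmatrix}, \qquad g_t=\diag(e^{-nt},e^{t},\ldots,e^{t}). \]
Writing $\delta(\Lambda)$ for the length of a shortest nonzero vector of a lattice $\Lambda$, a good simultaneous approximation $(q,\p)$ with $\|q\x+\p\|\approx|q|^{-v}$ produces, at time $t\approx\frac{v+1}{n+1}\log|q|$, a vector of $g_t u_{\x}\Z^{n+1}$ of length $\approx|q|^{(1-nv)/(n+1)}$. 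Balancing the two blocks of coordinates shows that
\[ \sigma(\x)=\frac{1+\lambda(\x)}{n-\lambda(\x)}, \qquad \lambda(\x)\df\limsup_{t\to\infty}\frac{-\log\delta\!\big(g_t u_{\x}\Z^{n+1}\big)}{t}, \]
and the map $s\mapsto(1+s)/(n-s)$ is increasing on $[0,n)$. Since $\sigma$ is thus a fixed increasing function of the escape rate $\lambda$, the theorem is equivalent to the identical fourfold equality with $\sigma$ replaced by $\lambda$, and it suffices to work with $\lambda$ henceforth.

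Next I would dispose of the trivial inequalities. By definition $\lambda(\x)\geq\inf_{L}\lambda$ for every $\x\in L$, hence $\mu$-a.e.\ on $M$, which already gives $\sigma(M)\geq\inf_L\sigma$; combined with $\inf_L\sigma\leq\inf_M\sigma\leq\sigma(M)$ (and the same chain for $L$ in place of $M$), where $\sigma(M)=\esssup_M\sigma$ by \eqref{eq: deusigma}, everything collapses once I establish the single \emph{upper} bound
\[ \esssup_{\y\in M}\lambda(\y)\;\leq\;\inf_{\x\in L}\lambda(\x). \]
In words: although special points of $L$ may be forced deep into the cusp, $\mu$-almost every point of $M$ escapes no faster than the baseline rate common to the whole subspace. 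I would also record here that Khintchine transference \eqref{eq: transference1} is \emph{not} enough to deduce this from Theorem \ref{thm: exponent}: knowing $\omega(\y)$ exactly only confines $\sigma(\y)$ to an interval, so a genuinely simultaneous dynamical argument is required rather than a transfer of the dual result.

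To analyze the baseline I would use that $L=\x_0+V$ gives $u_{\x}=u_{\x_0}u_{\vv}$ for $\x=\x_0+\vv$, so $\{u_{\x}\Z^{n+1}:\x\in L\}$ is the left $u_{\x_0}$-translate of the family over the linear part $V$, and the non-extremality of $L$ is controlled, as in \cite{extremal} and \cite{exponent}, by rational subspaces $W\subseteq\R^{n+1}$ whose interaction with $V$ and $\x_0$ forces short vectors along the entire trajectory. The nondivergence analysis is carried out on the full flag $\bigoplus_k\wedge^k\R^{n+1}$: for each rational $W$, with associated primitive integer vector $\ww\in\wedge^{k}\Z^{n+1}$ where $k=\dim W$, one tracks the covolume $\|(g_t u_{\x})^{\wedge k}\ww\|$, the trajectory being deep in the cusp exactly when some such covolume is small. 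The quantity $\inf_L\lambda$ is the escape rate extracted from the optimal $W$ compatible with $L$, and the main work is to show that for $\mu$-a.e.\ $\y\in M$ no other rational subspace produces faster excursions. This is where nondegeneracy of $M$ in $L$ is decisive: it guarantees that the functions $\y\mapsto\|(g_t u_{\y})^{\wedge k}\ww\|$ are nonplanar and $(C,\alpha)$-good along $M$, which is precisely the hypothesis of the sharpened quantitative nondivergence estimate Lemma \ref{lem:  thm2.2}. Applying that estimate on a fixed ball for each target depth and summing over $\ww$ by a Borel--Cantelli argument shows that, off a $\mu$-null set, no $W$ beats the baseline, giving $\lambda(\y)\leq\inf_L\lambda$ a.e.

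The hard part will be this last step: extracting the \emph{exact} rate rather than mere extremality. Plain nondivergence only yields $\sigma(M)=1/n$ when $M$ is extremal; to land on the possibly larger value $\inf_L\lambda$ I must run the estimate relative to the baseline, separating the genuinely extremal transverse directions of $V$ from the finitely many rational directions that carry the unavoidable escape, and obtain bounds summable over the infinitely many competing $\ww$ uniformly in $t$ — exactly what the strengthened form of the estimate (Lemma \ref{lem:  thm2.2}), as opposed to its weaker version, is designed to supply. Once the displayed upper bound is established, the fourfold equality for $\lambda$, and hence for $\sigma$, follows as above; applying the same argument to $L$ itself, which is nondegenerate in itself, yields $\sigma(L)=\inf_L\sigma$ and completes the proof of Theorem \ref{thm: mainthm}.
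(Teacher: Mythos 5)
Your overall route is the paper's own: your correspondence is exactly \eqref{eq: defofuy}--\eqref{eq: sigmay} up to transposing $u_{\x}$ and rescaling time by $n$ (your $\lambda(\x)$ is $n\gamma(\x)$ in the notation of \eqref{eq: gammay}), your reduction of the fourfold equality to the single upper bound $\sigma(f_*\mu)\leqslant\inf_L\sigma$ is sound, and the toolkit you invoke --- the sharpened estimate Lemma \ref{lem: thm2.2}, a Borel--Cantelli argument, and nondegeneracy supplying $(C,\alpha)$-goodness (Lemma \ref{lem: goodmap}) and nonplanarity in $L$ --- is precisely what the paper uses. (One small correction: the Borel--Cantelli sum runs over integer times $t$ as in \eqref{eq: bcl}; Lemma \ref{lem: thm2.2} already handles all subgroups $\Gamma\subset\Z^{n+1}$ simultaneously at each fixed $t$, so no summation over $\w$ occurs.)

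The genuine gap is the step you yourself flag as ``the hard part'': verifying hypothesis (2) of Lemma \ref{lem: thm2.2}, i.e.\ \eqref{eq: critical} with $\rho=e^{-ct}$ at the critical $c$ corresponding to $v=\inf_L\sigma$. Your plan --- extract ``the optimal $W$ compatible with $L$'' and split off ``the finitely many rational directions that carry the unavoidable escape'' --- would fail as stated: the threshold is a supremum over infinitely many $\w\in S_{n+1,j}$ of every rank $j$ (see \eqref{eq: sigmaofa} and Corollary \ref{cor: sigmal}); it need not be attained, and there is no canonical finite set of escape directions. Moreover \eqref{eq: critical} is a sup-norm condition on $B\cap\supp\,\mu$, one inequality per pair $(\Gamma,t)$, not a pointwise a.e.\ condition, so ``showing that for $\mu$-a.e.\ $\y$ no rational subspace beats the baseline'' is circular: that a.e.\ statement is the output of the nondivergence machine, not an admissible input to it. The paper closes this loop by contraposition instead: Lemma \ref{lem: negation} shows, via Minkowski's theorem, that if \eqref{eq: critical} fails for some $d>c$ then \emph{every} point of $f(B\cap\supp\,\mu)$ lies in $\Sigma_u$ for some $u>v$; hence the mere existence of one point of exponent at most $v$ forces \eqref{eq: critical} to hold. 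Combined with the nonplanarity identity $\|\tilde{f}C(\w)\|_{\mu,B}\asymp\|RC(\w)\|$, which makes the equivalent condition \eqref{eq: another} depend on $L$ alone and not on $f$ or $\mu$, this dichotomy (Theorem \ref{thm: anothermain}) is what collapses $\sigma(f_*\mu)$, $\sigma(L)$, $\inf_L\sigma$ and $\inf_M\sigma$ to a single value. Without this dichotomy, or some substitute for it, your outline delivers extremality-type conclusions but not the exact exponent.
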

Theorem \ref{thm: mainthm} shows that  simultaneous Diophantine exponents of affine subspaces are inherited by their nondegenerate
submanifolds. Though Theorem \ref{thm: exponent} and \ref{thm: mainthm} look much alike, the latter cannot be deduced directly from the former. A simplified account for this
can be found in \eqref{eq: transference1}. When $\omega(\y)>n$, $\frac{\omega(\y)-n+1}{n}>
\frac{1}{n-1+n/\omega(\y)}$ and $\omega(\y)$ might take on any value between the two fractions ( we refer readers to
\cite{Jarnik} for such examples).

We will also compute explicitly Diophantine exponents of affine subspaces in terms of the coefficients of their
parameterizing maps. One instance of our accomplishment is the derivation of $\sigma(L)$ where $L$ is a hyperplane:
 Consider   $L \subset \R^n$ parameterized by
\begin{equation}\label{eq: ldefi}
(x_1,x_2,\ldots,x_{n-1})\rightarrow
(a_1x_1+\ldots+a_{n-1}x_{n-1}+a_n,x_1,\ldots,x_{n-1})
\end{equation}

If we denote the vector $(a_1,\ldots,a_n)$ by $\mathbf{a}$, then in \S4 we will establish
\begin{thm}\label{thm: hyperplane}
For L as described in \eqref{eq: ldefi}
\begin{equation}
 \sigma(L)=\max
\{1/n,\dfrac{\omega(\mathbf{a})}{n+(n-1)\omega(\mathbf{a})}\}
\end{equation}
\end{thm}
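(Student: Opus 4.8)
The plan is to deduce Theorem \ref{thm: hyperplane} from Theorem \ref{thm: mainthm}, which collapses the problem to the single quantity $\inf\{\sigma(\x):\x\in L\}$; the whole task is to evaluate this infimum for the explicit hyperplane \eqref{eq: ldefi}. First I would record the elementary identity that couples the two Diophantine problems. Writing a point of $L$ as $\x=(x_0,\x')$ with $x_0=\va'\cdot\x'+a_n$, $\va'=(a_1,\dots,a_{n-1})$, and choosing for a denominator $q$ integers $p_i$ nearest to $-qx_i$, one has
\begin{equation}\label{eq: keyid}
qx_0+p_0=(\mathbf{Q}\cdot\va+p_0)+\va'\cdot(q\x'+\mathbf{p}'),\qquad \mathbf{Q}:=(-p_1,\dots,-p_{n-1},q)\in\Z^n,
\end{equation}
with $\|\mathbf{Q}\|\asymp|q|$. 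Thus a simultaneous approximation of $\x$ of quality $\rho=\max_j|qx_j+p_j|$ is precisely the conjunction of (i) a simultaneous approximation of the free coordinates $\x'$ of quality $\rho$, equivalently $\mathbf{Q}$ lying within transverse distance $\rho$ of the line $\ell=\R(\x',1)$, and (ii) a linear-form approximation $\|\mathbf{Q}\cdot\va\|\lesssim\rho$ of the coefficient vector $\va$. This is the mechanism through which $\omega(\va)$ enters $\sigma(L)$.

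For the lower bound $\sigma(\x)\ge\tfrac{\omega(\va)}{n+(n-1)\omega(\va)}$, valid for every $\x\in L$, I would feed good linear forms of $\va$ into \eqref{eq: keyid}: starting from vectors $\mathbf{Q}^{*}$ with $\|\mathbf{Q}^{*}\cdot\va\|\le\|\mathbf{Q}^{*}\|^{-\omega(\va)+\varepsilon}$, use Dirichlet's theorem (a Minkowski box argument) in the $(n-1)$ transverse directions to complete $\mathbf{Q}^{*}$ to a vector close to $\ell$ while keeping its linear-form value small. Balancing the two rates — the linear-form rate $\|\mathbf{Q}\|^{-\omega(\va)}$ controlling the dependent coordinate against the rate $q^{-1/(n-1)}$ forced by the $n-1$ free coordinates — produces exactly the exponent $c$ with $1/c=(n-1)+n/\omega(\va)$, i.e. $c=\tfrac{\omega(\va)}{n+(n-1)\omega(\va)}$. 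Two consistency checks would guide the bookkeeping: when $\va$ is not very well approximable, $\omega(\va)=n$ gives $c=1/n$ (extremality); and in the rational case $\va\in\Q^{n}$ one has $\omega(\va)=\infty$, the linear-form condition degenerates to a single congruence costing only a bounded factor, so one approximates just the $n-1$ free coordinates and recovers $c=\tfrac{1}{n-1}$, matching the formula.

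For the matching upper bound I would show the infimum is no larger, i.e. that a full-measure set of $\x\in L$ has $\sigma(\x)\le c$. Using \eqref{eq: keyid} in reverse, any simultaneous approximation of quality $q^{-\sigma}$ forces a lattice vector $\mathbf{Q}$ that is simultaneously within transverse distance $q^{-\sigma}$ of $\ell$ and satisfies $\|\mathbf{Q}\cdot\va\|\lesssim\|\mathbf{Q}\|^{-\sigma}$. Counting how many such \emph{doubly good} vectors can occur as $q$ ranges — how many lattice points clustered (in the $(n-1)$ transverse directions) near a generic line $\ell$ can be good linear forms for a vector of exponent $\omega(\va)$ — caps the achievable $\sigma$ at $c$ for almost every $\x$. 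Finally one simplifies and notes that $\omega(\va)\ge n$ forces $c\ge 1/n$, giving the stated maximum, with equality $\sigma(L)=1/n$ exactly when $\va$ is not very well approximable.

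The hardest part is pinning the \emph{exact} constant $c$, and in particular the surprising lower-bound enhancement above the trivial value $1/n$. The naive estimate — treating the values $\|\mathbf{Q}\cdot\va\|$ along $\ell$ as equidistributed — yields only $1/n$, so the genuine content is that the abnormally good linear forms of a very well approximable $\va$ are accessible from a generic line at precisely the intermediate rate $c$, interpolating between the generic ($1/n$) and rational ($1/(n-1)$) regimes. Making this rigorous is where I expect the real effort: the lower bound needs the fine structure of the good approximations of $\va$ to realize the two-scale construction, and the upper bound needs the dual counting, which can be phrased as a measure estimate for $\{\x\in L:\sigma(\x)>v\}$. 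The latter is exactly the kind of excess-approximation statement that translates, via the Dani correspondence, into control of the fraction of time the trajectory $\gt\uy\Z^{n+1}$ spends deep in the cusp, so the sharpened nondivergence estimate of Lemma \ref{lem:  thm2.2} is the natural tool should a self-contained count prove unwieldy.
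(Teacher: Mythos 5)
Your reduction via Theorem \ref{thm: mainthm} to computing $\inf\{\sigma(\x)\,|\,\x\in L\}$ is consistent with the paper's framework, but the proposal has a fatal problem: it is built on the wrong Diophantine exponent of $\va$. In the paper's notation the quantity $\omega(\mathbf{a})$ in Theorem \ref{thm: hyperplane} is the matrix exponent \eqref{eq: matrixomega} of the \emph{column} $A=(a_1,\ldots,a_n)^T$ from \eqref{eq: par}; by the paper's own dictionary ($\omega(A)=\sigma(\y)$ when $A=\y^T$) this is the \emph{simultaneous} exponent of $\va$, not the linear-form exponent \eqref{eq: omega} that your identity couples to. This is visible in the proof of Theorem \ref{thm: sigmana}: for a hyperplane the rank-$n$ condition \eqref{eq: matrixsigma} reads $\max_i|a_ix_1+x_{i+1}|<\|\pi(\w)\|^{-u}$ with $\|\pi(\w)\|\asymp|x_1|$, i.e.\ simultaneous approximation of $\va$ with the single denominator $x_1$. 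Accordingly your claimed pointwise lower bound $\sigma(\x)\geqslant\omega(\va)/(n+(n-1)\omega(\va))$ with the linear-form exponent is false in general: by Jarn\'{i}k-type examples (the same ones the paper invokes after Theorem \ref{thm: mainthm} to explain why Theorem \ref{thm: exponent} does not imply Theorem \ref{thm: mainthm}) one can have $\omega(\va)$ huge while the simultaneous exponent of $\va$ sits at the transference minimum, and then Corollary \ref{cor: sigmal} plus Theorem \ref{thm: sigmana} put $\sigma(L)$ strictly below your bound. The mechanism of failure is exactly at your ``completion'' step: the cheap vectors in the slab $\{(\mathbf{Q},p_0):|\mathbf{Q}\cdot\va+p_0|<\delta\}$ produced by one good solution $\mathbf{Q}^*$ are its multiples $m\mathbf{Q}^*$, and for $\mathbf{Q}=m\mathbf{Q}^*$ the transverse condition becomes $|m|\,\|Q_n^*\x'-(Q_1^*,\ldots,Q_{n-1}^*)\|$ small, which serves only $\x'$ near the single rational point $(Q_1^*,\ldots,Q_{n-1}^*)/Q_n^*$; steering $\mathbf{Q}^*$ transversally by a Minkowski box argument changes $\mathbf{Q}\cdot\va$ by amounts comparable to the $a_i$ and destroys the linear-form quality. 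The correct pointwise lower bound in fact has an elementary proof you could have given: for $\y=(\va'\x'+a_n,\x')\in L$ one has $q(\va'\x'+a_n)+\q'\cdot\x'+p=\x'\cdot(q\va'+\q')+(qa_n+p)$, so a good simultaneous approximation of $\va$ with denominator $q$ yields $\omega(\y)\geqslant\omega(A)$, and Khintchine transference \eqref{eq: transference1} then gives precisely $\sigma(\y)\geqslant\omega(A)/(n+(n-1)\omega(A))$.

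The upper bound is also not proven. Your ``dual counting'' of doubly good vectors is a heuristic, and as you yourself note, for very well approximable $\va$ the values $\mathbf{Q}\cdot\va$ are exactly \emph{not} equidistributed, so no count is actually carried out; deferring to Lemma \ref{lem: thm2.2} concedes the paper's method, but then the genuine content of the paper's proof is still missing. In the dynamical reduction one must rule out divergence caused by sublattices of \emph{every} rank $j=1,\ldots,n+1$, and the entire published proof of Theorem \ref{thm: hyperplane} consists in showing $\sigma_j(A)=0$ for $1\leqslant j\leqslant n-1$: the exterior-algebra argument that if $\|RC(\w)\|$ as in \eqref{eq: rcwhyperplane} is small then each term of $C_1(\w)$ is killed by some index $k\notin I$, forcing all $C_i(\w)=0$ and hence $\pi(\w)=0$, together with the separate treatment of vanishing coefficients $a_i$; Theorem \ref{thm: sigmana} then supplies the rank-$n$ value and Corollary \ref{cor: sigmal} assembles the maximum. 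Your rank-one analysis (approximants correspond to rank-one subgroups) has no counterpart to this elimination of intermediate ranks, so even granting the nondivergence machinery the proposal does not close the upper bound.
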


The main result of this paper is actually much more general than Theorem \ref{thm: mainthm}. We will be considering maps from Besicovitch metric
spaces endowed with Federer measures (we postpone definitions of terminology till \S2). We will be able to include in our results
measures of the form $\mathbf{f}_*\mu$ where $\mu$ satisfies certain decay conditions as discussed in \cite{KLW}.

In \S4 we will also study examples where $\sigma(L)$ is determined by the coefficients of its
parameterizing map in a more intricate manner. In \S5 we will give an illustration as to how the process of ascertaining
$\sigma(L)$ differs from that of ascertaining  $\omega(L)$.

\bigskip

\section{Quantitative nondivergence}

We will study homogeneous dynamics and how these relate to Diophantine approximation of vectors. First we define the space of unimodular lattices as follows:
\begin{equation}
\Omega_{n+1}\stackrel{\mathbf{def}}{=}\SL(n+1,\R)\diagup \SL(n+1,\Z)
\end{equation}
 $\Omega_{n+1}$ is non-compact,   and can be decomposed as
\begin{equation} \label{eq: space}
\Omega_{n+1}=\bigcup_{\epsilon>0}K_\epsilon
\end{equation}
where
\begin{equation}
K_\epsilon=\{\Lambda \in \Omega_{n+1}| \textrm{  }\|v\| \geqslant
\epsilon \textrm{  for all nonzero   }v \in \Lambda\}
\end{equation}
Each $K_\epsilon$ is compact by Mahler's compactness criterion (see \cite{M}).
\begin{remark}
$\|\;\;\|$ can be either the maximum  or Euclidean norm on $\R^{n+1}$ and both can be used for decomposing $\Omega_{n+1}$
into union of compact subspaces because for each $v=(v_1,\ldots,v_{n+1})$ there exists $C_1>0$ and $C_2>0$ such that
$C_1\max\{|v_1|,\ldots,|v_{n+1}|\}\leqslant  \sqrt{v_1^2+\ldots+v_{n+1}^2}$\\$\leqslant C_2\max\{|v_1|,\ldots,|v_{n+1}|\}$.
We assume it to be the maximum here and extend to the space of discrete subgroups of $\R^{n+1}$.
For nonzero $\Gamma$ we let $\|\Gamma\|$ be the volume of the quotient space $\Gamma_\R \diagup \Gamma$, where $\Gamma_\R$ is the $\R$ linear
span of $\Gamma$. If $\Gamma=\{0\}$, we set $\|\Gamma\|=1$.
\end{remark}
Next we set
\begin{displaymath}
\Sigma_v\stackrel{\mathbf{def}}{=}\{\y \in \R^n \textrm{  }|\;\exists \infty \textrm{many } q
\in \Z \textrm{ such that }\|q\y-\p\|<|q|^{-v} \}
\end{displaymath}
Obviously $\sigma(\y)=\sup\{v\;|\;\y \in \Sigma_v\}$.

Set $g_t=\diag\{\underbrace{e^{t/n},e^{t/n},\ldots,e^{t/n}}_n,e^{-t}\} \in \SL(n+1,\R)$ with $t\geqslant 0$ and associate
$\y \in \R^n$ with matrix
\begin{equation}\label{eq: defofuy}
u_\y=\left(\begin{array}{cc} I_n & \y\\
0 & 1\end{array}\right)
\end{equation}
Consider lattice
\begin{equation}\label{eq: uyzn}
\left\{ \left(
          \begin{array}{c}
            q\y+\p \\
            q \\
          \end{array}
        \right)\Big|\;q \in \Z, \p \in \Z^n
\right\}=u_{\y}\Z^{n+1}
\end{equation}
When we have $g_t$ act on  vectors in $u_{\y}\Z^{n+1}$ as defined by \eqref{eq: uyzn}, the first $n$ components will be expanded and the last one ($q$) will be contracted. A definitive correlation between $\sigma(\y)$ and trajectory of certain lattices in $\Omega_{n+1}$ was
proposed and proved in \cite{extremal}. This is a special case of Theorem 8.5 of \cite{KM3} on logarithm laws.
\begin{lem}\label{lem: abequi}
Suppose we are given a set $E \in \R^2$ which is discrete and
homogeneous with respect to positive integers, and take $a,b>0$,
$v>a/b$. Define $c$ by $c=\dfrac{bv-a}{v+1}$, then the following are
equivalent:

\begin{enumerate}
\item $\exists (x,z) \in E$  with arbitrarily large $|z|$ such that
$|x| \leqslant |z|^{-v}$
\item $\exists$  arbitrarily large $t>0$ such
that for some $(x,z) \in E$ one has\\
max$(e^{at}|x|,e^{-bt}|z|) \leqslant e^{-ct}$
\end{enumerate}

\end{lem}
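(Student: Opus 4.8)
The plan is to collapse both statements onto the \emph{same} pair of inequalities in $(x,z)$ by choosing the free parameter $t$ optimally, the entire equivalence resting on a single identity between the exponents. First I would record the elementary consequences of $c=\frac{bv-a}{v+1}$: since $v>a/b$ we have $bv-a>0$, so $c>0$, and a direct computation gives
\[
a+c=\frac{(a+b)v}{v+1},\qquad b-c=\frac{a+b}{v+1},
\]
so that $b-c>0$ and, crucially, $(b-c)v=a+c$, i.e. $\frac{a+c}{b-c}=v$. This one identity is exactly what makes the two exponent regimes coincide.

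For $(1)\Rightarrow(2)$, suppose $(x,z)\in E$ has $|z|$ large and $|x|\le |z|^{-v}$. I would pick $t$ so that the contracting coordinate saturates its bound, namely $t=\frac{\log|z|}{b-c}>0$, giving $e^{(b-c)t}=|z|$. Then $e^{-bt}|z|=e^{-ct}$ exactly, while
\[
e^{at}|x|\le e^{at}|z|^{-v}=e^{at}e^{-(b-c)vt}=e^{at}e^{-(a+c)t}=e^{-ct},
\]
using $(b-c)v=a+c$. Hence $\max(e^{at}|x|,e^{-bt}|z|)\le e^{-ct}$, and since $t\to\infty$ as $|z|\to\infty$, arbitrarily large $|z|$ yields arbitrarily large admissible $t$.

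For $(2)\Rightarrow(1)$, from $\max(e^{at}|x|,e^{-bt}|z|)\le e^{-ct}$ I extract $|x|\le e^{-(a+c)t}$ and $|z|\le e^{(b-c)t}$; raising the second to the power $v$ and invoking $(b-c)v=a+c$ gives $|z|^{v}\le e^{(a+c)t}$, whence $|x|\le e^{-(a+c)t}\le |z|^{-v}$ for every witnessing point with $z\ne 0$. The remaining task is to guarantee that the values of $|z|$ are genuinely unbounded, and this is where I expect the only real friction. If the witnesses for arbitrarily large $t$ already have $|z|$ unbounded, then $(1)$ follows immediately. Otherwise $|z|$ stays bounded while $|x|\le e^{-(a+c)t}\to 0$, so the points accumulate on the $z$-axis; discreteness of $E$ forces the witnesses to lie in a fixed bounded box, hence take finitely many values, so $x=0$ for all large $t$, producing a nonzero point $(0,z)\in E$. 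At this point I would invoke homogeneity with respect to positive integers (closure of $E$ under $(x,z)\mapsto(kx,kz)$): the points $(0,kz)\in E$, $k=1,2,\dots$, satisfy $|0|\le |kz|^{-v}$ with $|kz|\to\infty$, which is precisely $(1)$. Thus the main obstacle is not the exponent bookkeeping but this last step---using discreteness to eliminate the degenerate bounded-$|z|$ scenario and then restoring ``arbitrarily large $|z|$'' from homogeneity---together with the standing convention that the witnessing points $(x,z)$ are nonzero (otherwise the origin makes $(2)$ vacuous).
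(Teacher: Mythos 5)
Your proof is correct, and it is essentially the standard argument: the paper itself offers no proof of this lemma (it is imported from \cite{extremal}, as a special case of Theorem 8.5 of \cite{KM3}), and the proof there rests on exactly the computation you give --- the identity $(b-c)v=a+c$ coming from $c=\frac{bv-a}{v+1}$, with the optimal choice $t=\frac{\log|z|}{b-c}$ saturating $e^{-bt}|z|=e^{-ct}$ in one direction, and the two extracted bounds $|x|\leqslant e^{-(a+c)t}$, $|z|\leqslant e^{(b-c)t}$ in the other. Your handling of the bounded-$|z|$ degeneration is also the right one, and it correctly identifies why the hypotheses of discreteness and homogeneity with respect to positive integers appear in the statement at all: discreteness pins the witnesses to finitely many points and forces $x=0$, and homogeneity then regenerates arbitrarily large $|z|$ from a single point $(0,z)$. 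Your caveat about the origin is likewise apt rather than a gap: in the intended application the zero vector is excluded by fiat, since membership in $K_\epsilon$ is defined by a condition on \emph{nonzero} lattice vectors, so the witnessing pairs $(x,z)=(\|q\y+\p\|,|q|)$ are taken nonzero.
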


In the light of  Lemma \ref{lem: abequi}, if we set $v>1/n$, $\y \in \R^n$ and \\$E=\{(\|q\y+\p\|,|q| )\textrm{  }
|\textrm{  }q \in \Z, \p \in \Z^n\}$, (1) of Lemma \ref{lem: abequi} is equivalent to
\[\sigma(\y) \y \in \Sigma_v\]
By setting $a=1/n, b=1$  and $\R_+=\{x\in \R|x\geqslant 0 \}$ one sees (2) of Lemma \ref{lem: abequi} is
equivalent to
\begin{equation} \label{eq: gt}
g_tu_y\Z^ {n+1} \notin
K_{e^{-ct}} \textrm{ for an unbounded set of }t \in \R_+
\end{equation}
where $\|\;\;\|$ is the maximum  norm and
\begin{equation} \label{eq: cv}
c=\dfrac{v-1/n}{v+1}
\Leftrightarrow  v=\dfrac{1/n+c}{1-c}=\dfrac{1+nc}{n(1-c)}
\end{equation}

If, in compliance with the definition of $\su$, we set
\begin{equation}\label{eq: gammay}
\gamma(\y)= \textrm{sup} \{c \;| \textrm{  }g_tu_\y\Z^ {n+1} \notin
K_{e^{-ct}} \textrm{ for an unbounded set of }t \in \R_+\}
\end{equation}
then by \eqref{eq: cv}  we have
\begin{equation}\label{eq: sigmay}
\sigma(\y)=\dfrac{1+n\gamma(\y)}{n(1-\gamma(\y))}
\end{equation}

\medskip

Suppose $\nu$ is a measure on $\R^n$,  and $v\geqslant 1/n$ , by \eqref{eq: deu} and what ensues
$\sigma(\nu)\leqslant v$ if and only if
\begin{equation}\label{eq: lowernu}
\nu(\Sigma_u)=0 \; \; \forall u>v.
\end{equation}
\eqref{eq: lowernu} is  equivalent to
\begin{equation}
\label{eq: equilowernu}
\nu(\{\y|\textrm{  }g_tu_\y\Z^ {n+1} \notin K_{e^{-dt}} \textrm{
for an unbounded set of }t \in \R_+\})=0, \; \forall d>c
\end{equation}
where $c$ is related to $v$ via fractions of \eqref{eq: cv}.

\eqref{eq: equilowernu} can be further simplified into
\begin{equation}
\label{eq: integerequilowernu}
\nu(\{\y|\textrm{  }g_tu_\y\Z^ {n+1} \notin K_{e^{-dt}} \textrm{
for an unbounded set of }t \in \N\})=0, \; \forall d>c
\end{equation}

By the Borel-Cantelli Lemma, a sufficient condition for $\sigma(\nu)\leqslant
v$, or \eqref{eq: integerequilowernu} is
\begin{equation}\label{eq: bcl}
\sum_{t=1}^\infty\nu(\{\y|\textrm{  }g_tu_\y\Z^ {n+1} \notin
K_{e^{-dt}}\}<\infty\textrm{,      }\forall d>c
\end{equation}

The following lemma, established in \cite{exponent}, serves as a sharpening of
quantitative nondivergence.  First an assembly of relevant concepts from the same resource (to trace their historical development
see also \cite{KM}, \cite{KLW})

A metric space $X$ is called $N-Besicovitch$ if for any bounded
subset $A$ and any family $\beta$ of nonempty open balls of $X$ such
that each $x \in A$ is a center of some ball of $\beta$, there is a
finite or countable subfamily $\{\beta_i\}$ of $\beta$ covering $A$
with multiplicity at most $N$. $X$ is $Besicovitch$ if it is $N-Besicovitch$ for some $N$.
\medskip

Let $\mu$ be a locally finite Borel measure on $X$, $U$ an open
subset of $X$ with $\mu(U)>0$. Following \cite{KLW} we call $\mu$  $D-Federer$ on $U$ if
\begin{displaymath}
\sup _{\begin{subarray}{1}x \in \textrm{supp }\mu,\; r>0
\\ B(x,3r)\subset U  \end{subarray}}\dfrac{\mu(B(x,3r))}{\mu(B(x,r)}<D
\end{displaymath}
$\mu$ is said to be $Federer$ if for $\mu$-a.e. $x \in X$ there
exists a neighborhood $U$ of $x$ and $D>0$ such that $\mu$ is
$D-Federer$ on $U$.

An important illustration of the above notions is that $\R^d$ is Besicovitch and $\lambda$, the Lebesgue measure
is $Federer$. Many natural measures supported on fractals are also known to be $Federer$ (see \cite{exponent} for technical details).

\medskip
For a subset $B$ of $X$ and a function $f$ from $B$ to a normed space with norm $\|\textrm{ } \|$, we define
  $\|f\|_B= \textrm{sup }_{x \in B}\|f(x)\|$. If $\mu$ is a locally finite
  Borel measure on $X$ and $B$ a subset of $X$ with $\mu(B)>0$
  $\|f\|_{\mu, B}$ is set to be $\|f\|_{B \cap \textrm{supp }\;\mu}$.

 A function $f: X\rightarrow \R$ is called $(C,\alpha)$-good on $U \subset X$
with respect to $\mu$ if for any open ball $B$ centered in supp
$\mu$ one has
\begin{displaymath}
\forall \varepsilon>0 \qquad  \mu(\{x \in B\;|\;|f(x)|<\varepsilon\})\leqslant
C\big(\dfrac{\varepsilon}{\|f\|_{\mu,B}}\big)^{\alpha}\mu(B).
\end{displaymath}
Roughly speaking a function is $(C,\alpha)$-good if the set of points where it takes small value
has small measure. In Lemma \ref{lem: thm2.2} we will see that functions of the form $\x\rightarrow
\|h(\x)\Gamma\|$ , where $\Gamma$ runs through subgroups of $\Z^{n+1}$, are $(C, \alpha)$-good with uniform $C$ and $\alpha$.

Let $\mathbf{f}=(f_1,\ldots,f_n)$ be a map from $X$ to $\R^n$. Following \cite{exponent} we say that ($\mathbf{f}, \mu$) is good
at $x \in X$ if there exists a neighborhood V of $x$ such that any linear combination of $1,f_1,\ldots,f_n$
is $(C,\alpha)$-good on V with respect to $\mu$ and ($\mathbf{f}, \mu$) is good if ($\mathbf{f}, \mu$) is good at $\mu$-almost
every point. Reference to measure will be omitted if $\mu=\lambda$,  and we will simply say that $\mathbf{f}$
is good or good at $x$. For example polynomial maps are good. \cite{extremal} proved the following result:
\begin{lem}\label{lem: goodmap}
Let $L$ be an affine subspace of $\R^n$ and let $\mathbf{f}$ be a smooth map from $U$, an open subset of $\R^d$ to $L$
which is nondegenerate at $\x \in U$, then $\mathbf{f}$ is good at $\x$.
\end{lem}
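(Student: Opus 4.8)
The plan is to combine the analytic heart of quantitative nondivergence---that a sufficiently nondegenerate $C^\ell$ function is $(C,\alpha)$-good, established in \cite{KM}, \cite{KLW}---with a compactness argument over the space of coefficient vectors that forces the constants $C,\alpha$ to be uniform across all linear combinations. Write $L=\w_0+W$ with $W$ the linear part and $m=\dim W$. Since $\mathbf{f}(U)\subset L$, every partial derivative $\partial^\beta\mathbf{f}$ with $|\beta|\ge 1$ takes values in $W$. Nondegeneracy at $\x$ supplies an order $\ell$ with $W=\spa\{\partial^\beta\mathbf{f}(\x):1\le|\beta|\le\ell\}$; picking $m$ multi-indices whose derivative-vectors form a basis of $W$ and using continuity of these finitely many vectors, I first fix a ball $V_0\ni\x$ on which $\{\partial^\beta\mathbf{f}(\x'):1\le|\beta|\le\ell\}$ spans $W$ for \emph{every} $\x'\in V_0$.

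For a linear combination $g=c_0+\mathbf{c}\cdot\mathbf{f}$ the relevant quantity is $\Phi_{(c_0,\mathbf{c})}(\x')\df\max_{0\le|\beta|\le\ell}|\partial^\beta g(\x')|$, whose order-$0$ term is $g$ itself and whose higher terms are $\mathbf{c}\cdot\partial^\beta\mathbf{f}$. Because $(C,\alpha)$-goodness is invariant under scaling $g$, it is enough to control a compact family of coefficient vectors. The subtlety is that $g\equiv 0$ on $V_0$ exactly on the subspace $Z\df\{(c_0,\mathbf{c}):\mathbf{c}\perp W,\ c_0=-\mathbf{c}\cdot\w_0\}$: indeed $\mathbf{c}\perp W$ forces $\mathbf{c}\cdot\mathbf{f}$ to be constant on the connected set $V_0$, and then $g$ vanishes identically iff that constant is $0$. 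Since $g$ depends only on the class of $(c_0,\mathbf{c})$ modulo $Z$, I fix a linear complement $Y$ of $Z$ and reduce to proving uniform $(C,\alpha)$-goodness over the unit sphere $S_Y$ of $Y$.

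Next I claim a uniform lower bound: there are a ball $V\subset V_0$ about $\x$ and $\delta>0$ with $\Phi_{(c_0,\mathbf{c})}(\x')\ge\delta$ for all $\x'\in V$ and all $(c_0,\mathbf{c})\in S_Y$. At the point $\x$ one has $\Phi_{(c_0,\mathbf{c})}(\x)=0$ only if $g(\x)=0$ and $\mathbf{c}\cdot\partial^\beta\mathbf{f}(\x)=0$ for $1\le|\beta|\le\ell$; the latter forces $\mathbf{c}\perp W$, hence $g$ is constant, and $g(\x)=0$ then gives $(c_0,\mathbf{c})\in Z$, impossible on $S_Y$. Thus $\Phi_{(c_0,\mathbf{c})}(\x)>0$ on the compact set $S_Y$, so it is $\ge\delta_0>0$ there; continuity of $(\x',(c_0,\mathbf{c}))\mapsto\Phi_{(c_0,\mathbf{c})}(\x')$ then propagates the bound to $V\times S_Y$ after shrinking $V$. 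At the same time $\sup_{\bar V\times S_Y}\|g\|_{C^\ell}\le M<\infty$, since $\mathbf{f}\in C^\ell$ and $S_Y$ is compact.

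With this two-sided control---upper bound $M$ on the $C^\ell$ norm and lower bound $\delta$ on the maximal derivative of order $\le\ell$---the nondivergence estimate of \cite{KM}, \cite{KLW} applies on $V$ and shows each $g$ with $(c_0,\mathbf{c})\in S_Y$ is $(C,\alpha)$-good with $\alpha=1/(d\ell)$ and $C$ depending only on $d,\ell$ and $M/\delta$, hence uniform over $S_Y$. By scale-invariance every combination with nonzero $Y$-component is $(C,\alpha)$-good on $V$, while those lying in $Z$ vanish identically on $V$ and are trivially good; thus all linear combinations of $1,f_1,\dots,f_n$ are $(C,\alpha)$-good on $V$, which is precisely the assertion that $\mathbf{f}$ is good at $\x$. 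The main obstacle is securing uniformity of the constants over the infinite family of combinations, resolved by the normalization-and-compactness step; the one genuine pitfall is the locus $Z$ where $g$ and all its derivatives collapse together, and isolating $Z$ by passing to the complement $Y$ is exactly what makes the lower bound $\delta$ attainable.
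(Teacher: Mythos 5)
Your argument is correct, but it takes a genuinely different route from the one this paper relies on: the paper does not prove this lemma at all, quoting it from \cite{extremal}, and the proof there is a short reduction through an affine chart. One fixes an affine isomorphism between $L$ and $\R^s$, writes $(f_1,\ldots,f_n,1)=(g_1,\ldots,g_s,1)R$ for suitable $R\in M_{s+1,n+1}$ (the same device this paper uses in \S 3 around \eqref{eq: rdefinition}), observes that nondegeneracy of $\mathbf{f}$ in $L$ at $\x$ is precisely nondegeneracy of $\mathbf{g}$ in $\R^s$ in the usual sense, and that every linear combination of $1,f_1,\ldots,f_n$ is a linear combination of $1,g_1,\ldots,g_s$; Proposition 3.4 of \cite{KM} then applies verbatim and yields a neighborhood on which all such combinations are uniformly $(C,\frac{1}{d\ell})$-good. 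You instead re-run the engine inside that proposition directly on $\mathbf{f}$: your subspace $Z$ of coefficient vectors $(c_0,\mathbf{c})$ with $\mathbf{c}\perp W$ and $c_0=-\mathbf{c}\cdot\w_0$, on which $g$ vanishes identically, is exactly the kernel of the passage from combinations of $1,f_1,\ldots,f_n$ to combinations of $1,g_1,\ldots,g_s$, and your normalization over the unit sphere $S_Y$ of a complement, the positivity of $\Phi$ at $\x$ via the spanning condition, and the compactness/joint-continuity step propagating the lower bound $\delta$ to $V\times S_Y$ are all sound. What each approach buys: yours is self-contained modulo a single-function goodness criterion and makes visible exactly where nondegeneracy in $L$ (rather than in $\R^n$) enters, namely in isolating $Z$; the chart reduction is three lines, outsources every analytic difficulty, and handles the degenerate coefficient directions for free. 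Two cosmetic repairs: what you invoke at the end is the $(C,\alpha)$-good criterion of \cite{KM} (Lemma 3.3 there, as deployed in the proof of Proposition 3.4; \cite{KLW} for measure versions), not a ``nondivergence estimate''---and since that lemma is not stated verbatim with a point-varying multi-index ranging over orders $0\le|\beta|\le\ell$, you should cite the proof of Proposition 3.4 of \cite{KM}, which establishes exactly the uniform statement you need, noting that the order-$0$ case is trivial (a function with $|g|\ge\delta$ and $\|g\|\le M$ on $V$ is $((M/\delta)^\alpha,\alpha)$-good for any $\alpha$). Also, connectedness of $V_0$ is not needed: $\mathbf{c}\perp W$ forces $\mathbf{c}\cdot\mathbf{f}\equiv\mathbf{c}\cdot\w_0$ on all of $U$ simply because $\mathbf{f}(U)\subset L$.
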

Furthermore if $L$ is an affine subspace of $\R^n$ and $\mathbf{f}$
a map from X into $L$, following \cite{exponent} we say ($\mathbf{f}, \mu$) is nonplanar in $L$ at $x
\in \textrm{supp }\mu$ if $L$ is equal to the intersection of all
affine subspaces containing $\mathbf{f}(B\cap \textrm{supp }\mu)$ for some open neighborhood $B$ of $x$.
($\mathbf{f}, \mu$) is nonplanar in $L$ if ($\mathbf{f}, \mu$) is nonplanar in $L$ at $\mu$-a.e. $x$. We skip saying $\mu$ when $\mu=\lambda$ and
skip $L$ if $L=\R^n$. From definition
($\mathbf{f}, \mu$) is nonplanar if and only if for any open $B$ of positive measure, the restrictions of $1,f_1,\ldots,f_n$ to $B\bigcap \textrm{supp }\mu$ are linearly independent over $\R$. Clearly nondegeneracy in $L$ implies nonplanarity in $L$.  Nondegenerate smooth maps from $\R^d$ to
$\R^n$ as in Lemma \ref{lem: goodmap} give typical examples of nonplanarity.

\medskip

Let $\Gamma$ be any discrete subgroup of $\R^{k}$  we denote by $rk(\Gamma)$ the rank of $\Gamma$ when viewed as a $\Z$-module.
We denote by $S_{n+1, j}$  the set
of subgroups of order j in $\Z^{n+1}$ for $1\leqslant j\leqslant n+1$.

\begin{lem}\label{lem:  thm2.2}
Let $k$, $N \in \N$ and $C,D,\alpha,\rho >0$ and suppose we are
given an $N$-Besicovitch metric space $X$, a ball $B=B(x_0,
r_0)\subset X$, a measure $\mu$ which is $D$-Federer on
$\tilde{B}=B(x_0, 3^kr_0)$ and a map $h$: $\tilde{B}\rightarrow
\GL_k(\R)$. Assume the following two conditions hold:
\begin{enumerate}
\item $\forall \;\Gamma \subset \Z^k$,  the function $ x\rightarrow
\|h(x)\Gamma\|$ is $(C, \alpha)$-good on $\tilde{B}$ with respect to
$\mu$;

\item $\forall \; \Gamma \subset \Z^k$, $\|h(\cdot )\Gamma\|_{\mu, B}\geqslant \rho^{rk(\Gamma)}$
\end{enumerate}
Then for any positive $\epsilon \leqslant \rho$ one has
\begin{equation}
\mu(\{x \in B|\textrm{ } h(x)\Z^k \notin K_{\epsilon}\})\leqslant
kC(ND^2)^k(\dfrac{\epsilon}{\rho})^{\alpha}\,\mu(B)
\end{equation}
\end{lem}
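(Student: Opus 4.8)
The plan is to prove this quantitative nondivergence estimate---which is the abstract Kleinbock--Margulis theorem in the $(C,\alpha)$-good framework of \cite{KLW}, \cite{KM}---by an inductive covering argument over the poset of subgroups of $\Z^k$. The starting observation is Mahler's criterion: $h(x)\Z^k \notin K_\epsilon$ means precisely that $h(x)\Z^k$ contains a nonzero vector of norm $<\epsilon$, i.e. that there is a rank-one primitive subgroup $\Gamma$ with $\|h(x)\Gamma\| < \epsilon$. Thus the bad set to be estimated is $\bigcup_\Gamma \{x \in B : \|h(x)\Gamma\| < \epsilon\}$ over rank-one $\Gamma$, and the task is to control this union while invoking only the two hypotheses: that each covolume function $\psi_\Gamma \df \|h(\cdot)\Gamma\|$ is $(C,\alpha)$-good on $\tilde{B}$, and that $\|\psi_\Gamma\|_{\mu, B} \ge \rho^{rk(\Gamma)}$.

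The combinatorial heart of the argument is the submodularity of the covolume, namely $\|h(x)(\Gamma+\Delta)\| \cdot \|h(x)(\Gamma\cap\Delta)\| \le \|h(x)\Gamma\|\cdot\|h(x)\Delta\|$ for any two subgroups $\Gamma,\Delta$. First I would establish this (it is the exterior-algebra inequality $|v\wedge w|\le|v|\,|w|$ applied to wedge representatives of the subgroups). Its consequence, which I would isolate as a lemma, is that for each fixed $x$ the set of primitive subgroups with $\|h(x)\Gamma\|$ below a suitable threshold is totally ordered by inclusion: if two such subgroups were incomparable, submodularity would force their intersection and sum to have a product of covolumes still below the threshold, and an induction on rank derives a contradiction. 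This chain structure is what makes a finite-depth recursion possible, since at each point only a nested flag of subgroups is \emph{active}.

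With these tools in hand I would run the covering induction on the rank, processing subgroups level by level. At a given level I would use hypothesis (2) to locate, on the relevant subball, a subgroup whose covolume function is bounded below by $\rho^{rk(\Gamma)}$; hypothesis (1) then bounds the $\mu$-measure of the locus where that function drops below $\epsilon$ by $C(\epsilon/\rho)^\alpha$ times the measure of the ball. The $N$-Besicovitch property lets me extract, from the balls centered at bad points, a subcover of multiplicity at most $N$, and the $D$-Federer property lets me compare the measure of an inflated ball---needed because the good property is stated on $\tilde{B}$ and because passing to the next level enlarges radii by a factor $3$---to that of the original, each such comparison costing a factor $D^2$. Tracking these through the $k$ levels produces the claimed constant $(ND^2)^k$, the radius dilation $3^k$ that defines $\tilde{B} = B(x_0, 3^k r_0)$, and the outer factor $k$ from summing over the possible ranks $1,\ldots,k$ of the witnessing subgroup; the single factor $(\epsilon/\rho)^\alpha$ survives because only one application of the good estimate is charged at the decisive level.

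The hard part will be the bookkeeping that makes the constants come out to exactly $kC(ND^2)^k(\epsilon/\rho)^\alpha$ rather than something larger: one must organize the recursion so that at each level exactly one factor of $ND^2$ is incurred and the good-function estimate is spent only once, and one must verify that the chain lemma genuinely prevents the families of balls across different levels from overlapping uncontrollably. Establishing the chain lemma with the correct threshold, and threading the Federer doubling through the radius dilations without losing track of the $3^k$, is where the real care is needed; the measure estimates themselves are then immediate from the $(C,\alpha)$-good hypothesis.
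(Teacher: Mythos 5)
First, a point of reference: the paper itself contains no proof of this lemma. It is imported verbatim from \cite{exponent}, with the surrounding text recording only that it is obtained there ``with an inductive process'' which strengthens Lemma \ref{lem: thm2.2weaker} of \cite{KM}, \cite{KLW}. Your sketch is, in substance, a reconstruction of the proof of that \emph{weaker} lemma: the reduction via Mahler's criterion to rank-one subgroups with small covolume, the submodularity inequality $\|h(x)(\Gamma+\Delta)\|\,\|h(x)(\Gamma\cap\Delta)\|\leqslant\|h(x)\Gamma\|\,\|h(x)\Delta\|$ and the resulting chain/flag structure, the Besicovitch extraction with multiplicity $N$, the Federer factor $D^2$ per level, the $3^k$ dilation defining $\tilde{B}$, and the outer factor $k$ --- all of this is an accurate outline of the Kleinbock--Margulis machinery, and those ingredients are correct as far as they go.

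The genuine gap is that you never engage with the one feature distinguishing this lemma from Lemma \ref{lem: thm2.2weaker}: hypothesis (2) supplies only the rank-graded floor $\rho^{rk(\Gamma)}$, which for $\rho<1$ --- exactly the regime of use, e.g.\ $\rho=e^{-ct}$ in Proposition \ref{prop: proposition} --- is \emph{weaker} than the uniform floor $\rho$ once $rk(\Gamma)\geqslant 2$. Your decisive step asserts that hypothesis (1) bounds the locus where the chosen covolume function drops below $\epsilon$ by $C(\epsilon/\rho)^\alpha\mu(B)$; but if the witnessing subgroup has rank $j$, the $(C,\alpha)$-good property combined with $\|h(\cdot)\Gamma\|_{\mu,B}\geqslant\rho^{j}$ yields only $C(\epsilon/\rho^{j})^\alpha\mu(B)$, strictly worse than claimed for $j\geqslant 2$, so the bookkeeping as described proves the weaker lemma and not this one. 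Two repairs are available. One is the route of \cite{exponent}: rerun the induction with rank-graded thresholds, using that $rk(\Gamma\cap\Delta)+rk(\Gamma+\Delta)=rk(\Gamma)+rk(\Delta)$ makes $\Gamma\mapsto\rho^{rk(\Gamma)}$ multiplicative along the submodularity inequality, so the chain argument survives with the graded floor. The other, available for the lemmas \emph{as stated in this paper}, is a renormalization: set $h'=\rho^{-1}h$, note that a rank-$j$ covolume scales by the $j$-th power, so $\|h'(x)\Gamma\|=\rho^{-rk(\Gamma)}\|h(x)\Gamma\|\geqslant 1$ on $B$ for every $\Gamma$, while $(C,\alpha)$-goodness is invariant under multiplying a function by a positive constant; applying Lemma \ref{lem: thm2.2weaker} to $h'$ with $\rho'=1$ and $\epsilon'=\epsilon/\rho\leqslant 1$, and observing $h(x)\Z^k\notin K_{\epsilon}\Leftrightarrow h'(x)\Z^k\notin K_{\epsilon/\rho}$, returns exactly the stated estimate $kC(ND^2)^k(\epsilon/\rho)^\alpha\mu(B)$. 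Either mechanism converting $\rho^{rk(\Gamma)}$ into the uniform factor $(\epsilon/\rho)^\alpha$ must appear explicitly; it is absent from your proposal, and without it the claimed constant is not reached.
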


Historically  one theorem of \cite{KM} established the above lemma in its weaker form:
\begin{lem}\label{lem:  thm2.2weaker}
Let $k$, $N \in \N$ and $C,D,\alpha,\rho >0$ and suppose we are
given an $N$-Besicovitch metric space $X$, a ball $B=B(x_0,
r_0)\subset X$, a measure $\mu$ which is $D$-Federer on
$\tilde{B}=B(x_0, 3^kr_0)$ and a map $h$: $\tilde{B}\rightarrow
\GL_k(\R)$. Assume the following two conditions hold:
\begin{enumerate}
\item $\forall \;\Gamma \subset \Z^k$,  the function $ x\rightarrow
\|h(x)\Gamma\|$ is $(C, \alpha)$-good on $\tilde{B}$ with respect to
$\mu$;

\item $\forall \; \Gamma \subset \Z^k$, $\|h(\cdot )\Gamma\|_{\mu, B}\geqslant \rho$
\end{enumerate}
Then for any positive $\epsilon \leqslant \rho$ one has
\begin{equation}
\mu(\{x \in B|\textrm{ } h(x)\Z^k \notin K_{\epsilon}\})\leqslant
kC(ND^2)^k(\dfrac{\epsilon}{\rho})^{\alpha}\,\mu(B)
\end{equation}
\end{lem}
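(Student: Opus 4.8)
The plan is to obtain this estimate as an essentially immediate consequence of the already-available strengthened Lemma \ref{lem:  thm2.2}. The two statements live in exactly the same framework: the same $N$-Besicovitch space $X$, the same ball $B=B(x_0,r_0)$, the same $D$-Federer measure $\mu$ on $\tilde{B}=B(x_0,3^kr_0)$, the same map $h\colon\tilde{B}\to\GL_k(\R)$, the same $(C,\alpha)$-good hypothesis (1), and the same target bound $\mu(\{x\in B: h(x)\Z^k\notin K_\epsilon\})\le kC(ND^2)^k(\epsilon/\rho)^\alpha\mu(B)$. They differ only in hypothesis (2): the strengthened lemma requires $\|h(\cdot)\Gamma\|_{\mu,B}\ge\rho^{rk(\Gamma)}$, whereas the present lemma assumes the uniform bound $\|h(\cdot)\Gamma\|_{\mu,B}\ge\rho$. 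Thus the entire task reduces to comparing $\rho$ with $\rho^{rk(\Gamma)}$ and checking that the uniform hypothesis implies the graded one with the same value of $\rho$.

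In the range $0<\rho\le 1$ this comparison is trivial. Every nonzero $\Gamma\subset\Z^k$ has $rk(\Gamma)\ge 1$, so $\rho^{rk(\Gamma)}\le\rho$; hence hypothesis (2) of the present lemma gives $\|h(\cdot)\Gamma\|_{\mu,B}\ge\rho\ge\rho^{rk(\Gamma)}$ for every $\Gamma$, which is precisely hypothesis (2) of Lemma \ref{lem:  thm2.2} with the same $\rho$. Applying Lemma \ref{lem:  thm2.2} verbatim yields, for every $0<\epsilon\le\rho$, the identical conclusion, finishing the argument. This is exactly the regime relevant to the applications in this paper, where $\epsilon\le\rho$ is taken small.

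For completeness one should also address $\rho>1$, where this reduction no longer produces the sharp constant: replacing $\rho$ by $1$ in Lemma \ref{lem:  thm2.2} only bounds the bad set by $kC(ND^2)^k\epsilon^\alpha\mu(B)$, which is weaker than the claimed $(\epsilon/\rho)^\alpha$ factor. Here I would fall back on the original argument of \cite{KM}, which proves the estimate directly for all $\rho>0$: one shows that at each point the subgroups $\Gamma$ with small $\|h(x)\Gamma\|$ are totally ordered by inclusion, via the submodularity inequality $\|h(x)(\Gamma_1\cap\Gamma_2)\|\,\|h(x)(\Gamma_1+\Gamma_2)\|\le\|h(x)\Gamma_1\|\,\|h(x)\Gamma_2\|$ for the covolume; one then inducts on the rank of this flag, extracts a bounded-multiplicity subcover from the $N$-Besicovitch property, passes between the scales $r_0$ and $3^kr_0$ using the $D$-Federer property (each level contributing a factor of $D^2$), and bounds every local piece by $(\epsilon/\rho)^\alpha$ using the $(C,\alpha)$-good hypothesis. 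The hardest part is exactly this combinatorial core: verifying the flag structure of the small subgroups and bookkeeping the constants through the induction so that they telescope to the stated $kC(ND^2)^k$; once the flag structure is in place, the measure-theoretic covering steps are routine given the Besicovitch, Federer, and good hypotheses.
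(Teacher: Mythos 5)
Your proposal is correct, but it necessarily takes a different route from the paper, because the paper contains no proof of this lemma at all: it is quoted as the historical antecedent of Lemma \ref{lem:  thm2.2}, namely the nondivergence theorem of \cite{KM}, and is never re-derived. Your reduction is nonetheless legitimate \emph{within this paper's logical economy}: since every nonzero $\Gamma\subset\Z^k$ has $rk(\Gamma)\geqslant 1$, the uniform hypothesis $\|h(\cdot)\Gamma\|_{\mu,B}\geqslant\rho$ implies the graded hypothesis $\|h(\cdot)\Gamma\|_{\mu,B}\geqslant\rho^{rk(\Gamma)}$ whenever $\rho\leqslant 1$, and Lemma \ref{lem:  thm2.2} then gives the identical conclusion with the identical constant; you correctly identified $\rho\leqslant 1$ as the only pivot on which this comparison turns. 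Two remarks temper this. First, the caveat of circularity: as the paper itself records, \cite{exponent} proves Lemma \ref{lem:  thm2.2} by an inductive process whose input is precisely the present lemma from \cite{KM}, so as a free-standing proof of the Kleinbock--Margulis theorem your reduction is circular; it is acceptable here only because this paper imports Lemma \ref{lem:  thm2.2} as a black box, exactly as it imports the present statement. Second, your $\rho>1$ fallback is moot: as stated the quantifier in hypothesis (2) runs over all subgroups including $\Gamma=\{0\}$, and since $h(x)\{0\}=\{0\}$ with the paper's convention $\|\{0\}\|=1$, that hypothesis already forces $\rho\leqslant 1$ (the sources state the theorem with the normalization $0<\rho\leqslant 1$, and in this paper's applications $\rho=e^{-ct}\leqslant 1$ anyway). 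So the sketched appeal to the original argument of \cite{KM} --- whose outline you report accurately (flags of small subgroups via submodularity of covolume, induction on rank, Besicovitch covering, Federer doubling, with the combinatorial flag structure as the genuine core) but which remains only a sketch --- is never actually needed; what your approach buys is a one-line deduction at the cost of inverting the historical logical order, while the paper's citation keeps the two lemmas independent as imported results.
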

For the wide number-theoretic applications of Lemma \ref{lem:  thm2.2weaker} we refer readers to papers like
\cite{KM},\cite{KLW}, to name a few. \cite{exponent} proves with an inductive process
that one can replace the second condition
$\|h(\cdot )\Gamma\|_{\mu, B}\geqslant \rho$ of Lemma \ref{lem:  thm2.2weaker} with $\|h(\cdot )\Gamma\|_{\mu, B}\geqslant \rho^{rk(\Gamma)}$
and thus obtains an strengthening of nondivergence estimates as recorded in  Lemma \ref{lem:  thm2.2}. Both \cite{exponent} and
the present paper exploit Lemma \ref{lem: thm2.2} to get Diophantine exponents of non-extremal spaces.

\begin{prop}\label{prop: proposition}
Let $X$ be a Besicovitch metric space, $B=B(x, r)\subset X$, $\mu$ a
measure which is D-Federer on $\tilde{B}=B(x, 3^{n+1}r)$ for some
$D>0$ and $ \mathbf{f}$ a continuous map from $\tilde{B}$ to $\R^n$.
Take $c \geqslant 0$ and assume that
\begin{enumerate}
\item
$\exists C,\alpha >0$ such that all the functions   $x\rightarrow
\|g_tu_{\mathbf{f}(x)}\Gamma\|$, $\Gamma \subset \Z^{n+1}$ are $(C,
\alpha)$- good on  $\tilde{B}$ with respect to $\mu$

\item
for any $d>c$, $\exists T=T(d)>0$ such that for any $t\geqslant T$
and
 any $\Gamma \subset \Z^{n+1}$ one has
\begin{equation}\label{eq: critical}
\|g_tu_{\mathbf{f}(\cdot)}\Gamma\|_{\mu,
 B}\geqslant e^{-rk(\Gamma)dt}
\end{equation}

\end{enumerate}
Then $\sigma(\mathbf{f}_*(\mu|_B))\leqslant v$ , where
$v=\dfrac{1/n+c}{1-c}$.

\end{prop}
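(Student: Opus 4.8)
The plan is to reduce the assertion $\sigma(\mathbf{f}_*(\mu|_B))\leqslant v$ to the Borel--Cantelli criterion \eqref{eq: bcl} and then feed the two hypotheses of the Proposition into the strengthened nondivergence estimate of Lemma \ref{lem: thm2.2}. Write $\nu=\mathbf{f}_*(\mu|_B)$. The chain of equivalences \eqref{eq: lowernu}--\eqref{eq: bcl} established above shows that it suffices to prove, for every $d>c$,
\[
\sum_{t=1}^{\infty}\nu\big(\{\y\mid g_tu_\y\Z^{n+1}\notin K_{e^{-dt}}\}\big)<\infty .
\]
Since $\nu$ is the pushforward of $\mu|_B$ under $\mathbf{f}$, each summand equals $\mu(\{x\in B\mid g_tu_{\mathbf{f}(x)}\Z^{n+1}\notin K_{e^{-dt}}\})$, which is precisely the quantity Lemma \ref{lem: thm2.2} is designed to bound, with $k=n+1$ and $h(x)=g_tu_{\mathbf{f}(x)}$.

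Next I would fix $d>c$ and choose an intermediate exponent $d'$ with $c<d'<d$; this is possible exactly because that interval is nonempty. Hypothesis (2) of the Proposition, applied with $d'$ in place of $d$, furnishes $T=T(d')$ so that for every $t\geqslant T$ and every $\Gamma\subset\Z^{n+1}$ one has $\|g_tu_{\mathbf{f}(\cdot)}\Gamma\|_{\mu,B}\geqslant e^{-rk(\Gamma)d't}=(e^{-d't})^{rk(\Gamma)}$. Thus, for each such $t$, the map $h(x)=g_tu_{\mathbf{f}(x)}$ satisfies the two conditions of Lemma \ref{lem: thm2.2} on $B$ (condition (1) being hypothesis (1) of the Proposition verbatim, since $\mu$ is $D$-Federer on $\tilde B=B(x,3^{n+1}r)$), with the parameter $\rho=e^{-d't}$. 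I would then invoke the Lemma with $\epsilon=e^{-dt}$; because $d>d'$ we have $\epsilon=e^{-dt}<e^{-d't}=\rho$, so the admissibility requirement $\epsilon\leqslant\rho$ holds.

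The Lemma then yields, for all $t\geqslant T(d')$,
\[
\mu\big(\{x\in B\mid g_tu_{\mathbf{f}(x)}\Z^{n+1}\notin K_{e^{-dt}}\}\big)\leqslant (n+1)C(ND^2)^{n+1}\,e^{-(d-d')\alpha t}\,\mu(B),
\]
where $N$ is a Besicovitch constant for $X$. Since $d-d'>0$, the right-hand side is the general term of a convergent geometric series, so the tail $\sum_{t\geqslant T(d')}$ is finite; the finitely many terms with $t<T(d')$ are each bounded by $\mu(B)<\infty$. Summing gives the required convergence for every $d>c$, and therefore $\sigma(\nu)\leqslant v$ with $v=(1/n+c)/(1-c)$ as in \eqref{eq: cv}.

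The argument is largely a bookkeeping assembly of ingredients already in place, so no single step is genuinely hard. The one point requiring care --- and the crux of the scheme --- is the insertion of the intermediate exponent $d'$: the hypotheses only guarantee the lower bound $\|g_tu_{\mathbf{f}(\cdot)}\Gamma\|_{\mu,B}\geqslant e^{-rk(\Gamma)d't}$, so one cannot take $\epsilon=\rho$ directly, as that would give a useless factor $(\epsilon/\rho)^\alpha=1$. Interposing $d'$ between $c$ and $d$ manufactures the gap $d-d'>0$ that turns $(\epsilon/\rho)^\alpha$ into the summable factor $e^{-(d-d')\alpha t}$. Checking that hypothesis (2) supplies condition (2) of Lemma \ref{lem: thm2.2} with the correct rank-dependent bound $\rho^{rk(\Gamma)}$ is exactly where the \emph{strengthened} form of the nondivergence estimate, rather than Lemma \ref{lem:  thm2.2weaker}, is indispensable.
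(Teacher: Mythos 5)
Your proposal is correct and follows essentially the same route as the paper: reduce to the Borel--Cantelli criterion \eqref{eq: bcl} and apply Lemma \ref{lem: thm2.2} with $k=n+1$, $h(x)=g_tu_{\mathbf{f}(x)}$, $\epsilon=e^{-dt}$. Your explicit interposition of an intermediate exponent $d'$ with $c<d'<d$ is in fact a careful rendering of a point the paper glosses over --- it writes $\rho=e^{-ct}$, which hypothesis (2) does not literally supply, yet its decay factor $e^{-\alpha\frac{d-c}{2}t}$ reveals the implicit midpoint choice $d'=\frac{c+d}{2}$ that you make explicit.
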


\begin{proof}

Apply Lemma \ref{lem: thm2.2} with $k=n+1$, $\mu=\mathbf{f}_*(\mu|_B)$, $h(x)=g_tu_{\mathbf{f}(x)}$
$\rho=e^{-ct}$ and $\epsilon=e^{-dt}$. $d\geqslant c\Leftrightarrow \epsilon\leqslant \rho$. It follows that
\begin{equation}
\mu(\{x \in B |\textrm{  }h(x)\Z^ {n+1} \notin
K_{e^{-dt}}\}
\leqslant const \cdot e^{-\alpha\frac{d-c}{2}t}\,\mu(B)\;\forall \;t\geqslant T
\end{equation}
Hence
\begin{equation*}
\sum_{t=1}^{\infty}\mu(\{x \in B |\textrm{  }h(x)\Z^ {n+1} \notin
K_{e^{-dt}}\}<\infty \quad \forall \;d>c
\end{equation*}

By previous discussion concerning \eqref{eq: bcl}, we conclude that  \\ $\sigma(\mathbf{f}_*(\mu|_B))\leqslant v$ for
$v=\dfrac{1/n+c}{1-c}$, as desired.

\end{proof}
\medskip

To get an appreciation of the purport of the proposition, let us
turn to the consequences of one of the conditions failing to be met.

\begin{lem}\label{lem:  negation}
Let $\mu$ be a measure on a set $B \subset \R^n$, take $c>0$, $v>1/n$ and
$c=\frac{v-1/n}{v+1}$. Let $\mathbf{f}$ be a map from $B$ to
$\R^n$ such that \eqref{eq: critical} does not hold, then
\begin{equation}
\mathbf{f}(B\bigcap \mathrm{supp} \textrm{ } \mu)\subset \Sigma_u
\textrm{ for some }u>v
\end{equation}
\end{lem}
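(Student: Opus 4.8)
The plan is to unwind the failure of \eqref{eq: critical} into a genuine simultaneous-approximation statement about each point of $\mathbf{f}(B\cap\supp\mu)$. That \eqref{eq: critical} fails means there is some $d_0>c$ for which, no matter how large a threshold is prescribed, there exist $t$ beyond it and a subgroup $\Gamma\subset\Z^{n+1}$ with $\|g_tu_{\mathbf{f}(\cdot)}\Gamma\|_{\mu,B}<e^{-rk(\Gamma)d_0t}$. Replacing $d_0$ by any smaller $d\in(c,d_0]$ only enlarges the right-hand side, so the same witnesses serve for every such $d$; since $c<1$ I may therefore fix once and for all a value $d$ with $c<d<1$, together with a sequence $t_k\to\infty$ and subgroups $\Gamma_k\subset\Z^{n+1}$ of rank $r_k=rk(\Gamma_k)$ satisfying $\|g_{t_k}u_{\mathbf{f}(\cdot)}\Gamma_k\|_{\mu,B}<e^{-r_k d t_k}$. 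As the left side is a supremum over $x\in B\cap\supp\mu$, this inequality holds at every individual $x$.

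Fix $x\in B\cap\supp\mu$ and write $\y=\mathbf{f}(x)$. By the Remark, $\|g_{t_k}u_{\y}\Gamma_k\|$ is the covolume of the rank-$r_k$ lattice $g_{t_k}u_{\y}\Gamma_k$ inside its linear span, and it is $<e^{-r_k d t_k}$. The key step is a Minkowski reduction: a rank-$r$ lattice in $\R^{n+1}$ always contains a nonzero vector whose norm is at most a dimensional constant times the $r$-th root of its covolume. This yields a nonzero $(\p,q)\in\Gamma_k\subset\Z^{n+1}$ with
\begin{equation*}
\max\bigl(e^{t_k/n}\|q\y+\p\|,\;e^{-t_k}|q|\bigr)=\|g_{t_k}u_{\y}(\p,q)^{T}\|\leqslant C e^{-dt_k},
\end{equation*}
where the cancellation of $r_k$ is exactly what makes $C$ depend only on $n$. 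Reading off the two coordinates gives $\|q\y+\p\|\leqslant Ce^{-(d+1/n)t_k}$ and $|q|\leqslant Ce^{(1-d)t_k}$. The pair $(\p,q)$ depends on both $x$ and $k$, but this is harmless, since for each fixed $x$ we need only exhibit infinitely many good integer pairs. I expect this passage from subgroups to short vectors, with a constant uniform in $k$, to be the main technical point.

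It remains to convert the two bounds into membership in some $\Sigma_u$ with $u>v$ (recall that in the definition of $\Sigma_u$ the sign of $\p$ is immaterial). Set $u^{*}=\tfrac{d+1/n}{1-d}$; since $s\mapsto\tfrac{s+1/n}{1-s}$ is increasing on $(0,1)$ and $d>c$, we have $u^{*}>v$ by \eqref{eq: cv}, so fix any $u$ with $v<u<u^{*}$. Using $|q|\geqslant1$, $d<1$ and $|q|\leqslant Ce^{(1-d)t_k}$ we get $|q|^{-u}\geqslant C^{-u}e^{-u(1-d)t_k}$, and since $u(1-d)<d+1/n$ a comparison of exponents gives $Ce^{-(d+1/n)t_k}<C^{-u}e^{-u(1-d)t_k}$ for all large $k$; hence $\|q\y+\p\|<|q|^{-u}$ for all large $k$. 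For large $k$ one has $q\neq0$, since $q=0$ would force $e^{t_k/n}\|\p\|\leqslant Ce^{-dt_k}$ with $\p\neq0$, which is impossible. If only finitely many values of $q$ occurred, one of them, say $q^{*}\neq 0$, would satisfy $\|q^{*}\y+\p\|\to0$ along a subsequence, forcing $q^{*}\y\in\Z^n$, i.e.\ $\y$ rational; but a rational $\y$ lies in every $\Sigma_u$ trivially. Otherwise infinitely many distinct $q$ arise, each with $\|q\y+\p\|<|q|^{-u}$, so $\y\in\Sigma_u$. In either case $\mathbf{f}(x)\in\Sigma_u$, and as $x\in B\cap\supp\mu$ was arbitrary we conclude $\mathbf{f}(B\cap\supp\mu)\subset\Sigma_u$ with $u>v$, as required.
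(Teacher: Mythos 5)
Your proposal is correct and follows essentially the same route as the paper: negate condition \eqref{eq: critical} with a fixed $d>c$, apply Minkowski's lemma to extract from each $g_{t_k}u_{\mathbf{f}(x)}\Gamma_k$ a nonzero integer vector of norm $\leqslant Ce^{-dt_k}$ with $C$ uniform in the rank, and translate this back into simultaneous approximation with exponent $u>v$. The only difference is presentational: where the paper concludes by citing the dynamical correspondence \eqref{eq: gammay}--\eqref{eq: sigmay} (i.e.\ Lemma \ref{lem: abequi}), you re-derive that correspondence by an explicit comparison of exponents, carefully handling the degenerate cases ($q=0$, boundedly many $q$, rational $\mathbf{y}$) that the cited lemma's discreteness hypothesis disposes of implicitly.
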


\begin{proof}

If  \eqref{eq: critical} does not hold, $\exists j$ with $ 1\leqslant j \leqslant n+1$ , a
sequence $t_i\rightarrow \infty$ and a sequence of discrete subgroups $\Gamma_i \in S_{n+1, j}$ such that for some
$d>c$
\begin{equation}
\forall x \in B\bigcap \mathrm{supp} \textrm{ } \mu \quad
\|g_tu_{\mathbf{f}(x)}\Gamma_i\|<e^{-jdt_i}
\end{equation}
 By Minkowski's lemma,
we have $\forall i$, $\forall x\in B\bigcap \mathrm{supp} \textrm{ } \mu$
there exists nonzero vector $v \in g_{t_i}u_{\mathbf{f}(x)}\Gamma_i
$ with $\|v\|\leqslant 2^je^{-dt_i}$
therefore
\begin{equation}
g_{t_i}u_{\mathbf{f}(x)}\Z^{n+1} \notin
K_{2^je^{-dt_i}} \; \textrm{  for an undounded set of }t
\end{equation}
 Hence  $ \gamma(\mathbf{f}(x))\geqslant d$ by \eqref{eq: gammay} and $ \sigma(\mathbf{f}(x))\geqslant u$ for some $u>v$
by \eqref{eq: sigmay}
\end{proof}
\bigskip

\section{Applications and calculations}
In this part we will utilize the theories established in \S 2 to get
some tangible applications.

Let $L$ be an $s$-dimensional affine subspace of $\R^n$.  Throughout  we will parameterize it as
\begin{equation} \label{eq: par}
\x \to (\tilde{\x}A,\x)
\end{equation}
where $\tilde{\x}$ stands for $( \x, 1)$, $\x \in \R^s$ and $A$
$\in M_{s+1,n-s}$ where  $M_{s+1,n-s}$ denotes the set of matrices of dimension $(s+1) \times (n-s)$.

We record the following observation:
\begin{prop}
Let $L$ be an $s$-dimensional affine subspace of $\R^n$ described by \eqref{eq: par}, then
\begin{equation}
\frac{1}{n}\leqslant \sigma(L)\leqslant \frac{1}{s}
\end{equation}
\end{prop}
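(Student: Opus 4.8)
The plan is to treat the two inequalities separately. The lower bound $\sigma(L)\geqslant\tfrac1n$ is immediate: by \eqref{eq: minkowski} every $\y\in\R^n$ satisfies $\sigma(\y)\geqslant\tfrac1n$, so in particular $\mu$-almost every point of $L$ does, whence $\sigma(L)=\sigma(\mu)\geqslant\tfrac1n$. The whole content of the proposition is therefore the upper bound $\sigma(L)\leqslant\tfrac1s$, and I would obtain it by exploiting the fact that in the parameterization \eqref{eq: par} the last $s$ coordinates of a point of $L$ are the ``free'' parameters $\x\in\R^s$ themselves.

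Concretely, write $\vf(\x)=(\tilde{\x}A,\x)$ and let $\pi\colon\R^n\to\R^s$ be the projection onto the last $s$ coordinates, so that $\pi\circ\vf=\mathrm{id}$. The key elementary observation is that simultaneous approximation can only improve under this projection. Suppose $\y=\vf(\x)$ and $\|q\y+\p\|<|q|^{-v}$ for some $q\in\Z$, $\p\in\Z^n$; writing $\p'\in\Z^s$ for the last $s$ entries of $\p$, the last $s$ coordinates of $q\y+\p$ are exactly $q\x+\p'$, and since the norm is the maximum norm we get $\|q\x+\p'\|\leqslant\|q\y+\p\|<|q|^{-v}$. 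Hence any $v$ realized infinitely often for $\y$ in $\R^n$ is also realized infinitely often for $\x$ in $\R^s$; denoting by $\sigma_s$ the simultaneous Diophantine exponent computed in $\R^s$, we get $\sigma_s(\x)\geqslant v$ whenever $\y$ is $v$-approximable for infinitely many $q$. Choosing $v\in(\tfrac1s,\sigma(\y))$ shows that $\sigma(\y)>\tfrac1s$ forces $\sigma_s(\x)>\tfrac1s$.

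It then remains to pass from points to the measure $\mu$. By the classical fact that Lebesgue-almost every $\x\in\R^s$ is not very well approximable, the set $S=\{\x\in\R^s\mid\sigma_s(\x)>\tfrac1s\}$ has $\lambda$-measure zero. Since $\mu=\vf_*\lambda$ and $\pi\circ\vf=\mathrm{id}$, the projection transports $\mu$ back to Lebesgue measure, $\pi_*\mu=\lambda$; combined with the inclusion $\{\y\in L\mid\sigma(\y)>\tfrac1s\}\subset\pi^{-1}(S)$ established above, this yields $\mu(\{\y\in L\mid\sigma(\y)>\tfrac1s\})\leqslant\pi_*\mu(S)=\lambda(S)=0$. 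By the definition \eqref{eq: deusigma} of $\sigma(L)=\sigma(\mu)$ as an essential supremum, this gives $\sigma(L)\leqslant\tfrac1s$. The argument is short, and the only point requiring care is bookkeeping about which exponent lives in which ambient dimension; the one genuine input is the classical zero-measure statement in $\R^s$, while the projection step is purely formal, so I would expect no serious obstacle.
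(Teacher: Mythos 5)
Your proof is correct and takes essentially the same route as the paper: the lower bound from Dirichlet's theorem via \eqref{eq: minkowski}, and the upper bound from the pointwise inequality $\sigma(\y)\leqslant\sigma(x_1,\ldots,x_s)$ obtained by projecting onto the parameter coordinates (immediate under the max norm from the parameterization \eqref{eq: par}), combined with $\sigma(\R^s)=\frac{1}{s}$. The paper states this in two lines, and you have simply made explicit the measure-theoretic bookkeeping ($\pi_*\mu=\lambda$ and the nullity of the set of very well approximable vectors in $\R^s$) that it leaves implicit.
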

\begin{proof}
Note that $\sigma(\y) \geqslant\frac{1}{n}$ for all $\y \in L$ hence $\sigma(L)\geqslant \frac{1}{n}$.\\
Also by \eqref{eq: par} for all $\y \in L$, $\sigma(\y)\leqslant \sigma(x_1,\ldots,x_s)$, hence\\
 $\sigma(L)\leqslant \sigma(\R^s)= \frac{1}{s}$.
\end{proof}

Although for any particular $\y \in L$, $\su$ is determined by how $L$ is  parameterized , later development will show that
$\sigma(L)$ is independent of parametrization. In brief, we are merely interested in whether a set is null or not, and that
is unaltered under invertible linear transformations.

For any matrix $A \in M_{s+1,n-s}$ we define
\begin{equation}
\label{eq: matrixomega} \omega(A)=\textrm{sup} \{v|\textrm{ } \exists \infty
\textrm{ many } \q \in \Z^{n-s}  \textrm{ with } \|A\q+\p\|<\|\q\|^{-v}
\textrm{ for some } \p \in \Z^{s+1}\}
\end{equation}

Comparing \eqref{eq: matrixomega} with \eqref{eq: sigma} and \eqref{eq: omega}, we see that given vector $\y=(y_1,\ldots,y_n)$
\begin{equation}
\omega(A)=\ou \;\; if\;A=\y,\qquad \omega(A)=\su\; \; if\;A=\y^T
\end{equation}
\medskip

 Suppose $\R^{n+1}$ has standard basis $
\mathbf{e}_1,\dots,\mathbf{e}_{n+1}$, and if we extend the Euclidean
structure of $\R^{n+1}$ to $\bigwedge^j(\R^{n+1})=\bigotimes ^j(\R^{n+1})\backslash W_j$
where $W_j$ is the subspace of $j-$tensors generated by transposition,
then for all
\begin{equation*}
I=\{i_1, i_2,\dots,
i_j\}\subset\{1,2,\dots, n+1\}, \quad i_1<i_2<\dots<i_j
\end{equation*}
$\{\mathbf{e}_I\;|\; \mathbf{e}_I=\mathbf{e}_{i_1}\wedge
\mathbf{e}_{i_2} \wedge \dots \wedge \mathbf{e}_{i_j}, \;
\textrm{\#}I=j\}$ form an orthogonal basis of\\
$\bigwedge^j(\R^{n+1})$.

If  a discrete subgroup $\Gamma \subset \R^{n+1}$ of rank $j$ is
viewed as a $\Z$-module with basis
$\mathbf{v}_1,\ldots,\mathbf{v}_j$ then we may represent it by  exterior product\\
$\w=\mathbf{v}_1\wedge\ldots\wedge \mathbf{v}_j$. Observing
$\|\Gamma\|=\|\w\|$,  we will be able to compute
$\|g_tu_\mathbf{f}\Gamma\|_{\mu,B}$ as in  \eqref{eq: critical}  directly.

 Further computation shows (up to $\pm$ signs of permutations)
 \begin{equation}
 u_y\mathbf{e}_i=  \left\{ \begin{array}{ll}
\mathbf{e}_i & \textrm{if $i \neq n+1$}\\
\\
\sum_{i=1}^n {y_j\mathbf{e}_j} +\mathbf{e}_{n+1}  & \textrm{if $i=
n+1$}
\end{array}\right.
 \end{equation}

Hence according to properties of exterior algebra,

\begin{equation}
 u_y\mathbf{e}_I=  \left\{ \begin{array}{ll}
\mathbf{e}_I & \textrm{if $n+1 \notin I$}\\
\\
\sum_{i=1}^n {y_j\mathbf{e}_{I\backslash{\{n+1\}}\cup {i}}
+\mathbf{e}_I} & \textrm{if $n+1 \in I $}
\end{array}\right.
 \end{equation}

 Therefore $\w \in \bigwedge ^j(\R^{n+1}) $ under left multiplication
 of $u_\y$ results in
\begin{equation}
u_\y \w=\pi (\w)+\sum_{i=1}^{n+1}C_i(\w)y_i
\end{equation}

where
\begin{equation}
\pi(\w)=\sum_{\substack{ \textrm{\#}I=j \\ {n+1} \in I
}}\langle\mathbf{e}_I, \w\rangle\mathbf{e}_I \qquad
\end{equation}
\begin{equation*}
 C_i(\w)=\sum_{\substack{ i \in I \\ I \subset
\{{1,2,\cdots,n}\}}}\langle\mathbf{e}_{I \backslash{\{i\}} \cup
{\{n+1\}}}, \w\rangle\mathbf{e}_I  \qquad 1\leqslant i\leqslant n
\end{equation*}
\begin{equation}
 C_{n+1}(\w)=\sum_{ I \subset
\{{1,2,\cdots,n}\}}\langle\mathbf{e}_{I}, \w\rangle\mathbf{e}_I,\quad y_{n+1}=1
\end{equation}

Note that $\sum_{i=1}^{n+1}C_i(\w)y_i$ denotes the image of $u_\y
\w$ under the projection from $\bigwedge ^{j}(\R ^{n+1})$ to
$\bigwedge ^{j}(V)$, where $V$ is the space spanned by
$\{\mathbf{e}_1,\mathbf{e}_2, \cdots, \mathbf{e}_n\}$. Apparently
$\bigwedge ^{j}(V)$ is orthogonal to $\pi(\w)$.

\begin{equation}\label{eq: gtuyw}
g_tu_\y \w=e^{-\frac{n+1-j}{n}}\pi
(\w)+e^{\frac{jt}{n}}\sum_{i=1}^{n+1}C_i(\w)y_i
\end{equation}

\eqref{eq: gtuyw} shows that $g_t$ action tends to contract the $\pi(\w)$ part
while extracting its orthogonal complement. As for the norm, up to
some constant,
\begin{equation}
\|g_tu_{\tilde{f}} \w\|=\mathrm{max}(e^{-\frac{n+1-j}{n}}\|\pi
(\w)\|,e^{\frac{jt}{n}}\|\tilde{f}(\textrm{ })C(\w)\|)
\end{equation}
where $\tilde{f}=(f_1, \ldots, f_n,1)$, and
\begin{displaymath}
C(\w)= \left( \begin{array}{ll}
C_1(\w)  \\
C_2(\w)   \\
\vdots  \\
C_{n+1}(\w)
\end{array}  \right)
\end{displaymath}

Denote by $\Theta_{\mu,B}$ the $\R$-linear span of the restriction
of $(f_1, \ldots, f_n,1)$ to $B \bigcap \textrm{supp }\mu$. Suppose $\Theta_{\mu,B}$ has dimension $s+1$.
Let $\mathbf{g}=(g_1,\ldots,g_s,1)$ be a basis of
the above space, then $\exists R \in M_{s+1,n+1}$ such that
$\tilde{f}=\mathbf{g}R$. $\|\tilde{f}C(\w)\|=\|\mathbf{g}RC(\w)\|$.
As the elements of $\mathbf{g}$ are independent, up to some
constant
\begin{equation*}
\|\tilde{f}C(\w)\|=\|RC(\w)\|
\end{equation*}

\eqref{eq: critical} is equivalent to \\
$\forall d>c,\exists T$ such that $\forall t \geqslant T,\forall
j=1,\ldots,n+1$ and $\forall \w \in S_{n+1,j}$ one has
\begin{equation}\label{eq: equicritical}
\max\Big(e^{-\frac{n+1-j}{n}}\|\pi
(\w)\|,\;e^{\frac{jt}{n}}\|RC(\w)\|\Big)\geqslant e^{-jdt}
\end{equation}

We may restate \eqref{eq: equicritical} in the language of Lemma \ref{lem: abequi} in the following manner

Set $E=\big\{(\|RC(\w)\|,\|\pi(\w)\|)\;|\;\w \in S_{n+1,j}\big\}$ which is discrete and homogeneous with respect to
positive integers.

Set  $a=\dfrac{j}{n},\;b=\dfrac{n+1-j}{n}$,  then \eqref{eq: equicritical} means $\forall c>c_0=j\dfrac{v-n}{v+1}$
 the second assumption of Lemma \ref{lem: abequi} does not hold for large enough  $\|\pi(\w)\|$.

This, by the same lemma, is equivalent to the first assumption
not being met with $v$ replaced by any number greater than
\begin{equation}
\dfrac{a+c_0}{b-c_0}=\dfrac{jv}{v+1-jv}
\end{equation}

Therefore \eqref{eq: equicritical} becomes equivalent to \\
$\forall j=1,2,\ldots,n$, $\forall \;u>\frac{jv}{v+1-jv}$ and $\forall \;
\w \in S_{n+1,j}$ with large enough $\|\pi(\w)\|$ one has
\begin{equation}\label{eq: another}
\|RC(\w)\|>\|\pi(\w)\|^{-u}
\end{equation}
\medskip

Now we will  adopt strategies devised in \cite{exponent} to prove Theorem
\ref{thm: mainthm}. Let $L=\tilde{f}(B\bigcap \textrm{supp }\mu)$. Suppose $L$ has dimension
$s$ and $\mathbf{h}:\R^s\rightarrow L$  is an affine isomorphism then
$\exists R \in M_{s+1,n+1}$ such that
\begin{equation}\label{eq: rdefinition}
(h_1,h_2,\ldots,h_n,1)(x)=(x_1,x_2,\ldots,x_s,1)R, \;\forall \x \in
\R^s
\end{equation}

\begin{thm}\label{thm: anothermain}
Let $\mu$ be a Federer measure on a Besicovitch metric space X, L an
affine subspace of $\R^n$, and let $f: X\rightarrow L$ be a
continuous map which is ($f,\mu$)-good  and $(f,\mu)$- nonplanar in $L$. Then
the following statements are equivalent for $v\geqslant 1/n$:

\begin{enumerate}
\item $\{x \in\textrm{supp }\mu|f(x) \notin \Sigma_u\}$ is
nonempty for any $u>v$

\item $\sigma(f_*\mu)\leqslant v$

\item \eqref{eq: another} holds for any R satisfying \eqref{eq: rdefinition}.
\end{enumerate}
\end{thm}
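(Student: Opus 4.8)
The plan is to establish the three-way equivalence by proving the cyclic chain $(1)\Rightarrow(3)\Rightarrow(2)\Rightarrow(1)$, exploiting the fact, already worked out in the computations preceding the theorem, that condition \eqref{eq: another} (statement (3)) is equivalent to the critical inequality \eqref{eq: critical} appearing in Proposition \ref{prop: proposition} and Lemma \ref{lem: negation}, with the constants $c$ and $v$ linked through \eqref{eq: cv}. At the outset I would fix a ball $B$ on which $(f,\mu)$ is good and nonplanar in $L$; nonplanarity guarantees that the affine hull of $f(B\cap\textrm{supp}\,\mu)$ is exactly $L$, so the matrix $R$ of \eqref{eq: rdefinition} is well defined and of the correct rank $s+1$, which is what makes statement (3) meaningful.

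For $(3)\Rightarrow(2)$ I would simply invoke Proposition \ref{prop: proposition}: since $(f,\mu)$ is good, hypothesis (1) of that proposition is met, and statement (3), being equivalent to \eqref{eq: critical} for the corresponding $c$, supplies hypothesis (2); the conclusion is precisely $\sigma(f_*\mu)\leqslant v$. For $(2)\Rightarrow(1)$ I would use the measure-theoretic reformulation \eqref{eq: lowernu}: the inequality $\sigma(f_*\mu)\leqslant v$ says that $f_*\mu(\Sigma_u)=0$ for every $u>v$, i.e. $\mu(\{x:f(x)\in\Sigma_u\})=0$. Were the set $\{x\in\textrm{supp}\,\mu:f(x)\notin\Sigma_u\}$ empty for some such $u$, the support would be contained in a $\mu$-null set, contradicting $\mu\neq 0$; hence this set has positive $\mu$-measure and is nonempty for every $u>v$, which is exactly statement (1).

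For $(1)\Rightarrow(3)$ I would argue by contraposition using Lemma \ref{lem: negation}. If (3) fails then \eqref{eq: another}, hence \eqref{eq: critical}, fails; Lemma \ref{lem: negation} then produces a single $u>v$ with $f(B\cap\textrm{supp}\,\mu)\subset\Sigma_u$, so that $\{x\in\textrm{supp}\,\mu:f(x)\notin\Sigma_u\}$ is empty for that $u$ and statement (1) fails as well. Closing the cycle completes the argument.

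The main obstacle I anticipate lies not in the logical skeleton above but in the bookkeeping needed to make the ball-dependent objects globally coherent: one must check that the constants $C,\alpha$ coming from goodness can be chosen uniformly over all $\Gamma\subset\Z^{n+1}$ (the content of the good-map lemmas feeding Proposition \ref{prop: proposition}), and that passing from the local equivalence on a single $B$ to the statement for $f_*\mu$ is legitimate; here the Federer and Besicovitch hypotheses permit covering $\textrm{supp}\,\mu$ by countably many admissible balls without destroying the measure-zero conclusions. The single most delicate point is the use of nonplanarity to force the span $\Theta_{\mu,B}$ to have full dimension $s+1$, for otherwise $R$ would have deficient rank and the translation between \eqref{eq: critical} and \eqref{eq: another} would break down.
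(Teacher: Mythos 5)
Your proposal is correct and follows essentially the same route as the paper: the same cycle $(3)\Rightarrow(2)$ via Proposition \ref{prop: proposition}, $(2)\Rightarrow(1)$ by noting the set has full measure, and $(1)\Rightarrow(3)$ by contraposition through Lemma \ref{lem: negation}, resting on the previously established equivalence of \eqref{eq: another}, \eqref{eq: equicritical} and \eqref{eq: critical}. Your closing remarks on the role of nonplanarity in fixing the rank of $R$ and on covering $\textrm{supp}\,\mu$ by admissible balls make explicit points the paper glosses over, but the argument is the same.
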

\begin{proof}
Suppose the second statement holds then the set in the first statement has full measure hence
is nonempty.

If the third statement holds previous discussion shows that
$\eqref{eq: another}\Leftrightarrow \eqref{eq: equicritical} \Leftrightarrow \eqref{eq: critical}$. We may apply
Proposition \ref{prop: proposition} to get the second statement.

If the third statement fails to hold, then no ball B intersecting supp $\mu$
satisfies \eqref{eq: critical}. By Lemma \ref{lem: negation} $f(B\bigcap\textrm{supp
}\mu) \subset \Sigma_u$ for some $u>v$. This would undermine the first statement.
\end{proof}
From Theorem \ref{thm: anothermain} we see that $\sigma(L)\leqslant\inf\{\su| \y \in L\}$ because the
first statement implies the second one.  $\sigma(L)\geqslant\inf\{\su| \y \in L\}$ is apparent by definition.

$\sigma(L)$ is inherited by
its nondegenerate submanifolds because  nondegeneracy implies ($f,\mu$)-goodness and ($f,\mu$)-nonplanarity by previous 
conceptual discussions.
 Therefore \\$\sigma(L)=\sigma(M)=\inf\{\su| \y \in L\}=\inf\{\su| \y \in M\}$  and Theorem \ref{thm: mainthm} is established.

Besides, Theorem \ref{thm: anothermain} establishes that
\begin{equation}\label{eq: computingsigmal}
\sigma(L)=\sup\{v\;| \;\eqref{eq: critical} \textrm{ does not hold}\}
\end{equation}

More significantly, it yields a more general result than Theorem \ref{thm: mainthm}.
\begin{thm}\label{thm: yetanothermain}
Let $\mu$ be a Federer measure on a Besicovitch metric space X, L an
affine subspace of $\R^n$, and let $f: X\rightarrow L$ be a
continuous map such that  ($f,\mu$) is  good  and  nonplanar in $L$ then
\begin{equation}
\sigma(f_*\mu)=\sigma(L)=\inf\{\su| \y \in L\}=\inf\{\sigma(f(x))| x \in \textrm{supp }\mu \}
\end{equation}
\end{thm}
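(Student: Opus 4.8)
The plan is to deduce Theorem \ref{thm: yetanothermain} formally from the three-way equivalence of Theorem \ref{thm: anothermain}, with essentially no new analysis: the strengthened nondivergence estimate (Lemma \ref{lem: thm2.2}) has already been spent inside Theorem \ref{thm: anothermain}. The key observation is that the third statement of that theorem --- that \eqref{eq: another} holds for every $R$ satisfying \eqref{eq: rdefinition} --- depends only on the affine subspace $L$, through the matrix $R$ attached to an affine parameterization $\mathbf{h}\colon\R^s\to L$, and not at all on the particular good nonplanar pair $(f,\mu)$ at hand. This $L$-intrinsic condition will act as a bridge between the given pushforward $f_*\mu$ and the canonical Lebesgue measure on $L$.

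First I would dispose of the easy inequalities. Since $f(x)\in L$ for every $x$, the set $\{\sigma(f(x))\mid x\in\mathrm{supp}\,\mu\}$ sits inside $\{\sigma(\y)\mid\y\in L\}$, so $\inf\{\sigma(f(x))\mid x\in\mathrm{supp}\,\mu\}\geqslant\inf\{\sigma(\y)\mid\y\in L\}$. Likewise, for any $v$ strictly below $\inf\{\sigma(\y)\mid\y\in L\}$ every point of $L$ lies in $\{\sigma>v\}$, a set of full Lebesgue measure in $L$, so $\sigma(L)\geqslant\inf\{\sigma(\y)\mid\y\in L\}$; this is the inequality already recorded after Theorem \ref{thm: anothermain}.

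The heart of the argument is the equality $\sigma(f_*\mu)=\sigma(L)$. Applying Theorem \ref{thm: anothermain} to $(f,\mu)$ gives, for each $v\geqslant 1/n$, the equivalence of $\sigma(f_*\mu)\leqslant v$ with the validity of \eqref{eq: another}. Applying the same theorem to the affine parameterization $\mathbf{h}$ of $L$ together with Lebesgue measure --- which is good by Lemma \ref{lem: goodmap} and plainly nonplanar in $L$ --- gives the equivalence of $\sigma(L)\leqslant v$ with the \emph{same} condition \eqref{eq: another}, since $R$ is unchanged. Comparing the two equivalences yields $\sigma(f_*\mu)\leqslant v\Leftrightarrow\sigma(L)\leqslant v$ for all $v\geqslant 1/n$, hence $\sigma(f_*\mu)=\sigma(L)$. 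The equality $\sigma(L)=\inf\{\sigma(\y)\mid\y\in L\}$ is then exactly the consequence of the $(\mathbf{h},\lambda)$ case drawn in the text after Theorem \ref{thm: anothermain}, combined with the reverse inequality above.

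It remains to close the gap $\inf\{\sigma(f(x))\mid x\in\mathrm{supp}\,\mu\}\leqslant\inf\{\sigma(\y)\mid\y\in L\}$. Set $v=\sigma(f_*\mu)=\sigma(L)=\inf\{\sigma(\y)\mid\y\in L\}$. Since statement (2) of Theorem \ref{thm: anothermain} holds at this $v$, so does statement (1): for every $u>v$ there exists $x\in\mathrm{supp}\,\mu$ with $f(x)\notin\Sigma_u$, and because $\Sigma_w$ shrinks as $w$ grows this forces $\sigma(f(x))\leqslant u$. Letting $u\downarrow v$ gives $\inf\{\sigma(f(x))\mid x\in\mathrm{supp}\,\mu\}\leqslant v$, completing the chain of four equalities. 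The only genuinely load-bearing point --- and the step I would take the most care over --- is the first one: recognizing that \eqref{eq: another} is an invariant of $L$ alone, which is what lets a single application of Theorem \ref{thm: anothermain} transport the answer from $\lambda_L$ to an arbitrary good nonplanar measure; everything after that is bookkeeping with the monotonicity of $\Sigma_w$ and the definition of $\sigma$.
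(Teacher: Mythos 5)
Your proof is correct and takes essentially the same route as the paper: Theorem \ref{thm: yetanothermain} is presented there precisely as a corollary of the three-way equivalence of Theorem \ref{thm: anothermain}, resting on the same key observation that condition \eqref{eq: another} depends only on $L$ through $R$ and so transports the exponent from Lebesgue measure on $L$ to any good nonplanar pair $(f,\mu)$. You merely make explicit some bookkeeping (the double application of the equivalence to $(f,\mu)$ and to $(\mathbf{h},\lambda)$, and the monotonicity of $\Sigma_u$) that the paper leaves implicit.
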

\cite{KLW}, for instance,  studied 'absolutely decaying and Federer' measures and proved that if $\mu$ is
absolutely decaying and Federer, and $f$ is nondenegerate at $\mu-a.e.$ points of $\R^d$, then
$(f,\mu)$ is good and nonplanar. Theorem \ref{thm: yetanothermain} is applicable to such generalized situations.

\medskip

To make all this more explicit, first note that for an
affine subspace  $L$ of dimension s  matrix $A$ as described in \eqref{eq: par} can
be read from matrix $R$ as in \eqref{eq: rdefinition} and vice versa,since
\begin{equation}
R=
 \left(  \begin{array}{cc}
 A & I_{s+1}
\end{array} \right)
\end{equation}
Set $\sigma_j(A)(1\leqslant j\leqslant n+1)=$
\begin{equation}\label{eq: sigmaofa}
\textrm{sup}\{v\big|\exists \w \in S_{n+1,j} \textrm{
with arbitrarily large }\|\pi(\w)\| \textrm{ and
}\|RC(\w)\|< \|\pi(\w)\|^{-\frac{jv}{v+1-jv}}\}
\end{equation}
 and we derive

\begin{cor}\label{cor: sigmal}
If L is an $s$ dimensional affine subspace of $\R^n$ parameterized
by \eqref{eq: par}, then
\begin{equation}\label{eq: corsigmal}
 \sigma(L)=\textrm{max
 }\{1/n,\sigma_1(A),\sigma_2(A),\ldots,\sigma_n(A)\}
\end{equation}
\end{cor}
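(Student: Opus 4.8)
The plan is to obtain Corollary~\ref{cor: sigmal} by unwinding the characterization \eqref{eq: computingsigmal} of $\sigma(L)$ furnished by Theorem~\ref{thm: anothermain}, together with the chain of equivalences $\eqref{eq: another}\Leftrightarrow\eqref{eq: equicritical}\Leftrightarrow\eqref{eq: critical}$ already established above. Since the affine isomorphism onto $L$ may be normalized so that $R=(A\;\;I_{s+1})$ in \eqref{eq: rdefinition}, condition \eqref{eq: another} is phrased directly in terms of the matrix $A$, and the whole task reduces to matching the \emph{failure} of \eqref{eq: critical} against the quantities $\sigma_j(A)$ defined in \eqref{eq: sigmaofa}.

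First I would negate \eqref{eq: another} for a fixed $v\geqslant 1/n$. That statement is universally quantified over $j$, over $u>\frac{jv}{v+1-jv}$, and over $\w\in S_{n+1,j}$ with large $\|\pi(\w)\|$; hence its negation reads: there is some $j\in\{1,\ldots,n\}$ and some $u>\frac{jv}{v+1-jv}$ for which there exist $\w\in S_{n+1,j}$ with arbitrarily large $\|\pi(\w)\|$ satisfying $\|RC(\w)\|\leqslant\|\pi(\w)\|^{-u}$. This is precisely the event measured by the definition \eqref{eq: sigmaofa} of $\sigma_j(A)$, once one writes the admissible exponents $u$ in the form $\frac{jw}{w+1-jw}$.

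The technical core is a monotonicity step. For each fixed $j$ the function $w\mapsto\frac{jw}{w+1-jw}$ is strictly increasing on the range where its denominator stays positive, so it admits an increasing inverse. Writing $\tau_j$ for the supremum of exponents $u$ such that arbitrarily large $\|\pi(\w)\|$ still admit $\|RC(\w)\|\leqslant\|\pi(\w)\|^{-u}$, the definition \eqref{eq: sigmaofa} says exactly that $\frac{j\,\sigma_j(A)}{\sigma_j(A)+1-j\,\sigma_j(A)}=\tau_j$ (up to the harmless distinction between strict and non-strict inequalities, which is immaterial when passing to suprema). Because $w\mapsto\frac{jw}{w+1-jw}$ is increasing, the condition $\frac{jv}{v+1-jv}<\tau_j$ from the previous paragraph is equivalent to $v<\sigma_j(A)$. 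Thus \eqref{eq: critical} fails for $v$ if and only if $v<\sigma_j(A)$ for some $j\in\{1,\ldots,n\}$, that is, if and only if $v<\max_j\sigma_j(A)$.

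Finally I would take the supremum over $v\geqslant 1/n$ in \eqref{eq: computingsigmal}: the equivalence just obtained gives $\sup\{v\mid\eqref{eq: critical}\text{ fails}\}=\max\{\sigma_1(A),\ldots,\sigma_n(A)\}$, and combining this with the universal lower bound $\sigma(L)\geqslant 1/n$ coming from Dirichlet's theorem yields $\sigma(L)=\max\{1/n,\sigma_1(A),\ldots,\sigma_n(A)\}$. I expect the main obstacle to be the careful handling of the nested quantifiers when negating \eqref{eq: another} and then passing to a supremum, and in particular verifying that the boundary behaviour of the two competing suprema (the one defining each $\sigma_j(A)$ and the outer one defining $\sigma(L)$) does not disturb the final maximum. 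A minor point still to be addressed is why $j$ ranges only over $\{1,\ldots,n\}$: for $j=n+1$ one has $C(\w)=0$, so \eqref{eq: another} holds automatically and that index contributes nothing.
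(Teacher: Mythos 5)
Your main line of argument coincides with the paper's own proof: the paper derives the corollary in exactly this way, by noting $\eqref{eq: another}\Leftrightarrow\eqref{eq: equicritical}\Leftrightarrow\eqref{eq: critical}$ and applying \eqref{eq: computingsigmal}; your explicit negation of the quantifiers and the monotonicity of $w\mapsto\frac{jw}{w+1-jw}$ (strictly increasing where $w+1-jw>0$) just spell out what the paper leaves implicit, and that bookkeeping is correct.

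However, your disposal of the index $j=n+1$ contains a genuine logical error. You correctly compute that $C(\w)=0$ for $\w\in S_{n+1,n+1}$, but the conclusion that \eqref{eq: another} therefore ``holds automatically'' is backwards: with $C(\w)=0$ the inequality $\|RC(\w)\|>\|\pi(\w)\|^{-u}$ reads $0>\|\pi(\w)\|^{-u}$ and is \emph{false} for every such $\w$, while $\|\pi(\w)\|=\|\w\|$ takes arbitrarily large values; so if $j=n+1$ were included in \eqref{eq: another}, the criterion would fail for every $v$ and your unwinding would output $\sigma(L)=\infty$, the opposite of ``contributes nothing.'' The real reason this stratum is harmless cannot be read off from \eqref{eq: another} at all: the passage from \eqref{eq: equicritical} to \eqref{eq: another} via Lemma \ref{lem: abequi} is unavailable for $j=n+1$, since the contraction rate $b=\frac{n+1-j}{n}$ vanishes there and the lemma requires $b>0$ --- which is precisely why \eqref{eq: another} is stated only for $j\leqslant n$. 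One must instead argue directly on \eqref{eq: critical}, as the paper does: $\bigwedge^{n+1}(\R^{n+1})$ is spanned by the single element $\mathbf{e}_1\wedge\dots\wedge\mathbf{e}_{n+1}$ and $g_tu_\y$ is unimodular, so for $\Gamma$ of rank $n+1$ one has $\|g_tu_{\mathbf{f}(x)}\Gamma\|=\|\Gamma\|\geqslant 1$ uniformly in $t$, whence \eqref{eq: critical} holds trivially for $j=n+1$ for every $d>0$, i.e.\ for every $v>1/n$. This is what justifies replacing $\sigma_{n+1}(A)$ by $1/n$ in \eqref{eq: corsigmal}; with this correction your proof is complete and is essentially the paper's.
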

\begin{proof}
Note $\eqref{eq: another}\Leftrightarrow \eqref{eq: equicritical} \Leftrightarrow \eqref{eq: critical}$ then
apply \eqref{eq: computingsigmal}.

 It remains to elucidate $\sigma_{n+1}(A)$.
 $\bigwedge^{n+1}(\R^{n+1})$ is spanned by a single element  $
\mathbf{e}_1\wedge\dots\wedge\mathbf{e}_{n+1}$, hence $\|g_tu_{\tilde{f}} \w\|_{B, \mu}$ has a positive lower bound.
\eqref{eq: critical} is always met as long as $v>1/n$ for $j=n+1$, and we replace $\sigma_{n+1}(A)$ with $1/n$ in \eqref{eq: corsigmal}.
\end{proof}

\section{Several examples}
 Corollary \ref{cor: sigmal} will prove to be effective for deriving explicit formulas of $\sigma(L)$. First we have
\begin{thm}\label{thm: sigmana}
$\sigma_n(A)=\dfrac{\omega(A)}{n+(n-1)\omega(A)}$
\end{thm}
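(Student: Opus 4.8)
The plan is to unwind the definition \eqref{eq: sigmaofa} of $\sigma_n(A)$ in the top corank case $j=n$, where the exterior-algebra data collapse to a single integer vector and the defining inequality becomes an ordinary simultaneous-approximation condition governed by $\omega(A)$. First I would specialize the formulas for $\pi(\w)$ and the $C_i(\w)$ to $j=n$. Since $\{1,\dots,n\}$ is its own unique $n$-element subset, every $C_i(\w)$ ($1\le i\le n+1$) is a scalar multiple of the single basis vector $\mathbf{e}_{\{1,\dots,n\}}$, with coefficient $w_i=\langle \mathbf{e}_{\{1,\dots,n+1\}\setminus\{i\}},\w\rangle$, while $\pi(\w)=\sum_{k=1}^{n}w_k\,\mathbf{e}_{\{1,\dots,n+1\}\setminus\{k\}}$. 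Hence $\|\pi(\w)\|=\max_{1\le k\le n}|w_k|$, and writing $\mathbf{c}=(w_1,\dots,w_{n+1})$ we obtain $\|RC(\w)\|=\|R\mathbf{c}\|$. The tuple $\mathbf{c}$ is precisely the Hodge dual (the Pl\"ucker coordinates) of $\w$: a saturated rank-$n$ subgroup $\mathbf{q}^\perp\cap\Z^{n+1}$ gives $\mathbf{c}=\pm\mathbf{q}$ with $\mathbf{q}$ primitive, and passing to finite-index subgroups multiplies $\mathbf{c}$ by an integer. Because only norms of $\mathbf{c}$ enter \eqref{eq: sigmaofa}, I may let $\mathbf{c}$ range over all of $\Z^{n+1}\setminus\{0\}$ as $\w$ ranges over $S_{n+1,n}$.

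Next, using the relation $R=(A\mid I_{s+1})$ recorded after \eqref{eq: rdefinition} and splitting $\mathbf{c}=(\mathbf{c}',\mathbf{c}'')$ with $\mathbf{c}'=(w_1,\dots,w_{n-s})$ and $\mathbf{c}''=(w_{n-s+1},\dots,w_{n+1})$, I would rewrite $\|RC(\w)\|=\|A\mathbf{c}'+\mathbf{c}''\|$. Thus the inequality defining $\sigma_n(A)$ reads $\|A\mathbf{c}'+\mathbf{c}''\|<\big(\max_{1\le k\le n}|w_k|\big)^{-\gamma}$ with $\gamma=\tfrac{nv}{v+1-nv}$, and the heart of the argument is to show that the supremum of those $\gamma$ for which such $\mathbf{c}$ exist with $\|\pi(\w)\|$ arbitrarily large equals $\omega(A)$. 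For the bound $\le\omega(A)$ I use $\max_{1\le k\le n}|w_k|\ge\|\mathbf{c}'\|$, so the condition forces $\|A\mathbf{c}'+\mathbf{c}''\|<\|\mathbf{c}'\|^{-\gamma}$; since a fixed $\mathbf{c}'$ admits only finitely many $\mathbf{c}''$ with bounded $\|A\mathbf{c}'+\mathbf{c}''\|$, infinitely many distinct $\mathbf{c}'=\mathbf{q}$ must occur, whence $\gamma\le\omega(A)$ by \eqref{eq: matrixomega}.

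For the reverse bound $\ge\omega(A)$ I would start from solutions $\|A\mathbf{q}+\mathbf{p}\|<\|\mathbf{q}\|^{-\gamma}$ with $\mathbf{p}$ the nearest integer vector to $-A\mathbf{q}$, so that $\|\mathbf{p}\|\ll_A\|\mathbf{q}\|$ and therefore $\max_{1\le k\le n}|w_k|\asymp_A\|\mathbf{q}\|$ for $\mathbf{c}=(\mathbf{q},\mathbf{p})$. The bounded distortion introduces only constant factors, which cost an arbitrarily small amount in the exponent, so the $\sigma_n$-condition holds for every $\gamma'<\gamma$; letting $\gamma\uparrow\omega(A)$ gives the claim. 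Finally, since $\gamma=\tfrac{nv}{v+1-nv}$ is strictly increasing in $v$ on the relevant range $v<\tfrac1{n-1}$, the supremum over admissible $v$ is attained where $\gamma=\omega(A)$; solving $\omega(A)\big(1-(n-1)v\big)=nv$ yields $v=\tfrac{\omega(A)}{n+(n-1)\omega(A)}$, as asserted.

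The hard part will be the two structural reductions rather than the concluding computation: verifying in Step~2 that the Hodge dual really sweeps out every nonzero integer vector as $\w$ runs over $S_{n+1,n}$, and establishing the two-sided estimate $\|\pi(\w)\|\asymp_A\|\mathbf{q}\|$ in the second half of the core step. In particular one must check carefully that discarding the last coordinate $w_{n+1}$ (absent from $\pi(\w)$) and replacing $\|\pi(\w)\|$ by $\|\mathbf{q}\|$ perturbs neither the set of attainable exponents nor the passage between ``arbitrarily large $\|\pi(\w)\|$'' and ``infinitely many $\mathbf{q}$''.
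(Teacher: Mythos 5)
Your proposal is correct and follows essentially the same route as the paper's proof: specialize $\pi(\w)$ and the $C_i(\w)$ at $j=n$ so that $\|RC(\w)\|=\|A\mathbf{c}'+\mathbf{c}''\|$ with $\mathbf{c}$ the dual integer vector, observe that smallness of $\|RC(\w)\|$ forces $\mathbf{c}''$ to be essentially determined by $\mathbf{c}'$ so that $\|\pi(\w)\|\asymp_A\|\mathbf{c}'\|$, and then match the exponent $\frac{nv}{v+1-nv}$ against $\omega(A)$ and solve for $v$. The two points you flag as delicate --- that the Hodge duals of elements of $S_{n+1,n}$ sweep out all of $\Z^{n+1}\setminus\{0\}$ (take an index-$m$ subgroup of the saturated lattice $\mathbf{q}^\perp\cap\Z^{n+1}$ to realize $\mathbf{c}=m\mathbf{q}$), and the two-sided exponent comparison under bounded distortion --- are handled implicitly in the paper's terser argument, so your write-up simply makes explicit what the paper asserts in one line.
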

\begin{proof}
For $\w \in S_{n+1,n}$,
\begin{equation}
\w=\sum_{j=1}^{n+1}x_j\mathbf{e}_{T\smallsetminus \{j\}}
\end{equation}
 where
$T=\{1,2,\ldots,n,n+1\}$ and $x_j \in \Z$. \\
Therefore
\begin{equation}
C_j(\w)=x_j\mathbf{e}_{T\smallsetminus \{n+1\}}\qquad 1\leqslant j\leqslant n+1
\end{equation}
$\pi(\w)=\sum_{j=1}^{n}x_j\mathbf{e}_{T\smallsetminus \{j\}}$
\qquad $\|\pi(\w)\|=\sqrt{x_1^2+x_2^2+\ldots+x_n^2}$\\
\begin{displaymath}
\left\|RC(\w)\right\|=\left\|(A\;\; I_{s+1})\left(
                \begin{array}{c}
                  x_1 \\
                  x_2 \\
                  \vdots \\

                  x_{n+1} \\
                \end{array}
              \right)\right\|=\left\| \left(
                                     \begin{array}{c}
                                       x_{n-s+1} \\
                                       \vdots \\
                                       x_n \\
                                       x_{n+1} \\
                                     \end{array}
                                   \right)+A\left(
                                              \begin{array}{c}
                                                x_1 \\
                                                x_2 \\
                                                \vdots \\
                                                x_{n-s} \\
                                              \end{array}
                                            \right)
                \right\|
\end{displaymath}
$\sigma_n(A)$  is consequently equal to the supremum of $v$ such that there exists
$\x=(x_1,\ldots,x_{n+1}) \in Z^{n+1}$ with arbitrarily
large $\sqrt{x_1^2+\ldots+x_n^2}$  and
\begin{equation}\label{eq: matrixsigma}
\left\| \left(
                                     \begin{array}{c}
                                       x_{n-s+1} \\
                                       \vdots \\
                                       x_n \\
                                       x_{n+1} \\
                                     \end{array}
                                   \right)+A\left(
                                              \begin{array}{c}
                                                x_{1} \\
                                                x_{2} \\
                                                \vdots \\
                                                x_{n-s} \\
                                              \end{array}
                                            \right)
                \right\|< \big(\sqrt{x_1^2+\ldots+x_n^2}\big)^{-\frac{nv}{v+1-nv}}
\end{equation}

In order for $\|RC(\w)\|$ to be sufficiently small, left side of the inequality of \eqref{eq: matrixsigma} has to be less than 1, hence
$(x_{n-s+1},\ldots,x_n,x_{n+1}) \in \Z^{s+1}$ are determined by
$(x_{1},x_{2},\ldots,x_{n-s}) \in \Z^{n-s}$.
Up to some constant
\begin{equation}
\sqrt{x_1^2+\ldots+x_n^2}\asymp\sqrt{x_1^2+\ldots+x_{n-s}^2}
\end{equation}
Now that the  definitions of
$\sigma_n(A)$ and $\omega(A)$ differ only by the exponents,
 we conclude that
\begin{equation}
\dfrac{n\sigma_n(A)}{1+(n-1)\sigma_n(A)}=\omega(A)\Rightarrow
\sigma_n(A)=\dfrac{\omega(A)}{n+(n-1)\omega(A)}
\end{equation}
\end{proof}
\medskip

Next we set out to prove Theorem \ref{thm: hyperplane} noting that we will be able to
eliminate all $\sigma_j(A)$ for $j<n$ and only $\sigma_n(A)$ matters.

\begin{proof} [Proof of Theorem \ref{thm: hyperplane}]

The aforementioned matrix $R$ satisfies\\
$(a_1x_1+\ldots+a_{n-1}x_{n-1}+a_n,x_1,\ldots,x_{n-1},1)=(x_1,\ldots,x_{n-1},1)R$.
Therefore
\begin{displaymath}
R=\left(
    \begin{array}{cc}
      A & I_n \\
    \end{array}
  \right)
\end{displaymath}
\begin{displaymath}
A=
 \left(  \begin{array}{c}
 a_1\\
 a_2\\
 \vdots\\
 a_n\\
\end{array} \right)
\end{displaymath}
\begin{equation}\label{eq: rcwhyperplane}
\|RC(\w)\|=\max\{
\|C_2(\w)+a_1C_1(\w)\|,\ldots,\|C_{n+1}(\w)+a_nC_1(\w)\|\}
\end{equation}
We claim that $\sigma_j(A)$ $(1\leqslant j \leqslant n-1$ remain
zero. To see this, consider $\w \in S_{n+1,j}$: as the norm of
$\pi(\w)$ is tending to $\infty$ it suffices to show that
$\|RC(\w)\|>\epsilon$ for some $\epsilon>0$ for large $\|\pi(\w)\|$.

Suppose not and assume first that $a_1,a_2,\ldots,a_{n-1}$ are all
nonzero. Recall
\begin{equation*}
\pi(\w)=\sum_{\substack{ \textrm{\#}I=j \\ {n+1} \in I
}}\langle\mathbf{e}_I, \w\rangle\mathbf{e}_I \qquad
\end{equation*}
\begin{displaymath}
C_j(\w)=\sum_{\substack{ j \in I \\ I \subset
\{{1,2,\cdots,n}\}}}\langle\mathbf{e}_{I \backslash{\{j\}} \cup
{\{n+1\}}}, \w\rangle\mathbf{e}_I  \qquad 1\leqslant j\leqslant n
\end{displaymath}
\begin{equation*}
 C_{n+1}(\w)=\sum_{ I \subset
\{{1,2,\cdots,n}\}}\langle\mathbf{e}_{I}, \w\rangle\mathbf{e}_I
\end{equation*}

Each term in $C_1(\w)$ is of the form $\langle\mathbf{e}_{I
\backslash{\{1\}} \cup {\{n+1\}}}, \w\rangle\mathbf{e}_I,1 \in I $.
For an arbitrary one, since  the index set $I$ has $i<n$ elements, $\exists k>1$ such that $k
\notin I$.

Consider $\|C_k(\w)+a_{k-1}C_1(\w)\|$(it cannot have a
positive lower bound by assumption), $\|C_k(\w)+a_{k-1}C_1(\w)\|\geqslant \|
a_{k-1}\langle\mathbf{e}_{I \backslash{\{1\}} \cup {\{n+1\}}},
\w\rangle\mathbf{e}_I\|$, \\therefore $\langle\mathbf{e}_{I \backslash{\{1\}}
\cup {\{n+1\}}}\w\rangle\mathbf{e}_I$ must be equal to zero.
Consequently  \\$C_1(\w)=0$. $C_i(\w)=0$ ($2 \leqslant i \leqslant n$) are forced to be zero by
\eqref{eq: rcwhyperplane}.
This contradicts the fact that $\pi(\w)$ is nonzero.

For arbitrary $A^t=(a_1,\ldots,a_t,0,\ldots,0,a_n)$ first note that according to  \eqref{eq: rcwhyperplane}
we have  $C_i(\w)=0$ ($t+1 \leqslant i \leqslant n$) or $\|RC(\w)\|$ cannot be arbitrarily small.
\begin{equation}\label{eq: anotherrcwhyperplane}
\|RC(\w)\|\geqslant\max\{
\|C_2(\w)+a_1C_1(\w)\|,\ldots,\|C_{t+1}(\w)+a_tC_1(\w)\|\}
\end{equation}

Employing previous analysis on \eqref{eq: anotherrcwhyperplane} shows that $C_i(\w)=0$ ($1 \leqslant i \leqslant t$) hence $\pi(\w)$ has to be
zero.

Therefore $\sigma_j(A)=0$ for $j<n$. Combining  Corollary \ref{cor: sigmal} and Theorem \ref{thm: sigmana},
Theorem \ref{thm: hyperplane} is established.
\end{proof}

\begin{remark}
When $A^t$ is of a special form $(0,0,\ldots,0,a)$ our conclusion
coincides with Satz 3 of \cite{Jarnik}. The latter proved this special case with
elementary method. This method, however, is not easily adjustable to
more general situations.
\end{remark}

\medskip

In the next theorem  we will study other subspaces than hyperplanes which highlight  $\sigma_j(A)$ with  $j<n$.

\begin{thm}
Consider a line ($L$)  $\subset \R^3$ that passes through the origin parameterized by
$x\rightarrow
(ax,bx,x)$. Set $\y=(a,b) \in \R^2$, then
\begin{equation}\label{eq: linesigma}
 \sigma(L)=\max\{1/3,\dfrac{\sigma(\y)}{2+\sigma(\y)},\dfrac{\omega(\y)}{3+2\omega(\y)}\}
\end{equation}
\end{thm}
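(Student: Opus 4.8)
The plan is to apply Corollary \ref{cor: sigmal} and reduce the whole statement to evaluating the three exponents $\sigma_1(A),\sigma_2(A),\sigma_3(A)$. First I would record the data of this line in the normal form \eqref{eq: par}: the parameter $x$ sits in the last slot and $(ax,bx)=(x,1)A$, so
\[
A=\begin{pmatrix} a & b\\ 0 & 0\end{pmatrix}\in M_{2,2},\qquad R=(A\;\;I_2)=\begin{pmatrix} a & b & 1 & 0\\ 0 & 0 & 0 & 1\end{pmatrix},
\]
and Corollary \ref{cor: sigmal} gives $\sigma(L)=\max\{1/3,\sigma_1(A),\sigma_2(A),\sigma_3(A)\}$. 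The goal is then to show $\sigma_1(A)=0$, $\sigma_2(A)=\tfrac{\sigma(\y)}{2+\sigma(\y)}$ and $\sigma_3(A)=\tfrac{\omega(\y)}{3+2\omega(\y)}$, which already exhibits the three entries of the asserted maximum.

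The outer two exponents are quick. For $j=3$ I would invoke Theorem \ref{thm: sigmana}: from \eqref{eq: matrixomega} one sees that $A\q+\p=(aq_1+bq_2+p_1,\;p_2)$, so smallness forces $p_2=0$ and $|aq_1+bq_2+p_1|$ small, whence $\omega(A)$ is exactly the linear-form exponent $\omega(\y)$ of $\y=(a,b)$ and $\sigma_3(A)=\tfrac{\omega(\y)}{3+2\omega(\y)}$. For $j=1$ a direct expansion on $\bigwedge^1(\R^4)$ gives $\pi(\w)=x_4\mathbf{e}_4$, while the complementary projection of $u_\y\w$ has components $x_i+x_4 y_i$ with slopes $x_4a,x_4b,x_4$; hence $\|RC(\w)\|\asymp\max(|x_1|,|x_2|,|x_3|,|x_4|)\geqslant\|\pi(\w)\|$, so no $\w$ makes $\|RC(\w)\|$ small while $\|\pi(\w)\|$ grows and $\sigma_1(A)=0$.

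The crux is $j=2$. I would expand $u_\y\w$ on $\bigwedge^2(\R^4)$, writing $\w=\sum_I w_I\mathbf{e}_I$ with integer Plücker coordinates, and keep the permutation signs. One finds $\|\pi(\w)\|\asymp\max(|w_{14}|,|w_{24}|,|w_{34}|)$, whereas $\|RC(\w)\|$ is, up to constants, the maximum of $|w_{12}|,|w_{13}|,|w_{23}|$ and of the three slopes $bw_{14}-aw_{24}$, $w_{14}-aw_{34}$, $w_{24}-bw_{34}$. Since $\|RC(\w)\|\to0$ forces the integers $w_{12},w_{13},w_{23}$ to vanish, the subgroup is spanned over $\Z$ by $\mathbf{e}_4$ and $(w_{14},w_{24},w_{34},0)$; setting $(p,q,r)=(w_{14},w_{24},w_{34})$ the surviving forms are $bp-aq,\;p-ar,\;q-br$. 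The key observation is that these are \emph{linearly dependent}, because $bp-aq=b(p-ar)-a(q-br)$, so controlling $p-ar$ and $q-br$ controls all three. Making these two small is exactly simultaneous approximation of $(a,b)$ with denominator $r$: with $p,q$ the nearest integers to $ar,br$ one gets $\|RC(\w)\|\asymp\max(\|ar\|,\|br\|)$ and $\|\pi(\w)\|\asymp|r|$. Thus the approximation exponent of the pair $(\|RC(\w)\|,\|\pi(\w)\|)$ is $\sigma(\y)$, and solving $\tfrac{2v}{1-v}=\sigma(\y)$ in \eqref{eq: sigmaofa} yields $\sigma_2(A)=\tfrac{\sigma(\y)}{2+\sigma(\y)}$. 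Feeding the three values into Corollary \ref{cor: sigmal} then produces \eqref{eq: linesigma}.

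The main obstacle, and the feature separating this example from the hyperplane case, is precisely the sign bookkeeping in the exterior algebra. A computation that drops the permutation signs produces three \emph{independent} forms (coefficient matrix of determinant $-2ab$), which would wrongly pin $(p,q,r)$ to a bounded set and give $\sigma_2(A)=0$; it is the genuine dependence $bp-aq=b(p-ar)-a(q-br)$, visible only with correct signs, that frees up a two-parameter family of subgroups and lets the simultaneous exponent $\sigma(\y)$ enter. Verifying this dependence (equivalently, the singularity of the relevant $3\times3$ matrix) is the step I would treat most carefully, together with the routine check that along the approximating sequence $\max(|p|,|q|,|r|)\asymp|r|$, so that $\|\pi(\w)\|$ really records the denominator.
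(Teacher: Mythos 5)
Your proposal is correct and follows essentially the same route as the paper: reduce via Corollary \ref{cor: sigmal} and Theorem \ref{thm: sigmana} (noting $\omega(A)=\omega(\y)$ for this $A$) to showing $\sigma_1(A)=0$ and $\sigma_2(A)=\sigma(\y)/(2+\sigma(\y))$, then compute $\pi(\w)$ and $RC(\w)$ on $\bigwedge^1(\R^4)$ and $\bigwedge^2(\R^4)$. Your explicit Pl\"ucker-coordinate work for $j=2$ --- in particular the sign-correct dependence $bp-aq=b(p-ar)-a(q-br)$ that makes the problem one of simultaneous approximation of $(a,b)$ with denominator $r$ --- merely supplies details the paper leaves implicit when it asserts that $\sigma_2(A)$ equals the supremum of $v$ with $\max(|qa+p_1|,|qb+p_2|)<|q|^{-2v/(1-v)}$ infinitely often.
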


\begin{proof}
Because $A=\left(
             \begin{array}{cc}
               a & b \\
               0 & 0 \\
             \end{array}
           \right)
$ in this case, by Corollary \ref{cor: sigmal} and Theorem \ref{thm: sigmana} we only need to prove
$\sigma_1(A)=0$ and $\sigma_2(A)=\dfrac{\sigma(\y)}{2+\sigma(\y)}$.

For $\w \in S_{4,1}$, $\w$ can be expressed as $x_1\mathbf{e}_1+\ldots+x_4\mathbf{e}_4$ with $x_i \in \Z$.
\begin{equation*}
\pi(\w)=x_4\mathbf{e}_4
\end{equation*}
\begin{equation*}
C_j(\w)=x_4\mathbf{e}_j \; 1\leqslant j\leqslant 3 \qquad  C_{4}(\w)=x_1\mathbf{e}_1+x_2\mathbf{e}_2+x_3\mathbf{e}_3
\end{equation*}
\begin{displaymath}
\Big\|RC(\w)\Big\|=\Big\|(A\;\; I_{2})C(\w)\Big\|\geqslant|x_4|=\|\pi(\w)\|
\end{displaymath}
By \eqref{eq: sigmaofa}, $\sigma_1(A)$ is zero as $\|RC(\w)\|$ becomes unbounded. Moreover for
 \begin{equation}
\w \in S_{4,2}=\sum_{1\leqslant i<j\leqslant 4} x_{i,j}\mathbf{e}_i\wedge \mathbf{e}_j,\quad x_{i,j}\in \Z
\end{equation}
  We have by \eqref{eq: sigmaofa} $\sigma_2(A)$  equal to the supremum of $v$ such that there exists
$(p_1,p_2,q) \in Z^3$ with arbitrarily
large $|q|$  and
\begin{equation}
\big\|\begin{array}{c}
  qa+p_1 \\
  qb+p_2
\end{array}\big\|
<|q|^{-\frac{2v}{v+1-2v}}
\end{equation}
Hence $\sigma_2(A)=\dfrac{\sigma(\y)}{2+\sigma(\y)}$ as desired.

\end{proof}

\section{Further remarks}

We study one low dimension example to see some distinction between $\sigma(L)$ and $\omega(L)$.
 Let $L=\{(x,a)|\;x \in \R\}$  with $\sigma(a)>2$. From definition we know at once that $\omega(x,a)\geqslant\sigma(a)\; \forall x$, hence
\begin{equation}\label{omegalgreater}
 \omega(L)\geqslant\sigma(a)
\end{equation}

\cite{exponent} by using dynamics showed that the lower bound was actually attained
\begin{equation}\label{omegal}
\omega(L)=\sigma(a)
\end{equation}

From \eqref{eq: transference1} and \eqref{omegal} we derive that $\sigma(L)\geqslant \dfrac{1}{1+2/\sigma(a)}$. However there seems to be
no way to know the exact value of $\sigma(L)$ simply from results of $\omega(L)$.

On the other hand from Theorem \ref{thm: hyperplane} we derive that
 \begin{equation}\label{sigmavalue}
\sigma(L)=\dfrac{1}{1+2/\sigma(a)}
\end{equation}

It turns out \eqref{sigmavalue} suffices to generate the exact value of $\omega(L)$ if we consider the following
argument:
by \eqref{eq: transference1}
\begin{equation}\label{transgreater}
\sigma(L)\geqslant\dfrac{1}{1+2/\omega(L)}
\end{equation}
By \eqref{sigmavalue} and \eqref{omegalgreater} and the fact that $f(x)=\dfrac{1}{1+2/x}$ is increasing we have
\begin{equation}\label{transmaller}
\sigma(L)\leqslant\dfrac{1}{1+2/\omega(L)}
\end{equation}
\eqref{transgreater} and \eqref{transmaller} show that $\sigma(L)=\dfrac{1}{1+2/\omega(L)}$. Comparing this with \eqref{sigmavalue}
we see that $\omega(L)=\sigma(a)$ as desired.

\bigskip

\textbf{Acknowledgement.} The author is grateful to Professor
Kleinbock for helpful discussions.


\begin{thebibliography}{DRV2}





\bibitem[C]{Cassels} J.\ Cassels, {\em An introduction to Diophantine Approximation}, Cambridge Tracts in Math.
Vol. 45, Cambridge Univ. Press,Cambridge,1957.

\bibitem[Da]{Dani} S.G.\ Dani, {\em Bounded orbits of flows
homogeneous spaces}, Comm.\ Math.\ Helv.\ {\bf 61} (1986), 636--660.

\bibitem[J]{Jarnik}   V.\  Jarnik, {\em Eine Bemerkung zum \"{U}bertragimgssatz}, B\"{u}algar.\ Akad.\ Nauk.\ Izv. \ Mat.\ Inst. (3)
(1959), 169--175.

\bibitem[M]{M}  K.\ Mahler,   {\em   \"{U}ber das Mass der Menge aller S-ZAHLEN},
     Math. \ Ann. 106(1932), 131--139.





\bibitem[K1]{extremal} D.\ Kleinbock, {\em Extremal subspaces and their submanifolds}, Geom. Funct. Anal. {\bf 13} (2003) 437--466.



\bibitem[K2]{exponent} D.\ Kleinbock, {\em An extension of quantitative nondivergence and application to diophantine exponents}, 
to appear in Transactions of the AMS  (2008).


































































\bibitem[K3]{survey} D.\ Kleinbock, {\em Some applications of
homogeneous dynamics to number theory}, in {\bf Smooth ergodic
theory and its applications (Seattle, WA, 1999)},  639--660, Proc.
Sympos. Pure Math., 69, Amer. Math. Soc., Providence, RI, 2001.


\bibitem[KLW]{KLW} D.\ Kleinbock,  E. \ Lindenstrauss, B. Weiss {\em
On fractal measures and Diophantine approximation },  Selecta. Math.
{\bf 10} (2004), 479--523.
















\bibitem[KM1]{KM}  D.\ Kleinbock and G. A.\ Margulis,
 {\em Flows on homogenous spaces and Diophantine approximation on manifolds}, Ann.\ Math. 148 (1998) 339--360.




\bibitem[KM2]{KM3}  D.\ Kleinbock and G.A.\ Margulis,
 {\em Logarithm laws for flows on homogenous spaces}, Inv.\ Math. 138 (1999) 451--494.



















































































\bibitem[S]{sCHMIDT} W.\ Schmidt, {\bf Diophantine Approximation}, Springer, Berlin, 1980.





\bibitem[Sp1]{Sp1} V.\ Sprind\v{z}uk, {\em More on Mahler's conjecture (in Russian)},
Doklady.\  Akad.\ Nauk.\ SSSR 155  (1964),  54--56.

\bibitem[Sp2]{Sp2} V.\ Sprind\v{z}uk, {\em Achievements and problems in Diophantine approximation theory},
Russian .\  Math.\ Surveys. 35  (1980),  1--80.


\end{thebibliography}
\end{document}